\newtheorem{theorem}[equation]{Theorem}
\newtheorem{lemma}[equation]{Lemma}
\newtheorem{proposition}[equation]{Proposition}
\theoremstyle{definition}
\newtheorem{definition}[equation]{Definition}
\theoremstyle{remark}
\newtheorem{remark}[equation]{Remark}
\numberwithin{equation}{section}
\newcommand{ \mr }{ \mathbb{R} }
\newcommand{ \R }{ \mathbb{R} }
\renewcommand{\epsilon}{\varepsilon}
\renewcommand{\phi}{\varphi}
\renewcommand{\le}{\leqslant}
\renewcommand{\ge}{\geqslant}
\renewcommand{\leq}{\leqslant}
\renewcommand{\geq}{\geqslant}
\renewcommand{\div}{\operatorname{div}}
\newcommand{\RN}[1]{%
  \textup{\uppercase\expandafter{\romannumeral#1}}%
}
\newcommand{\ainc}[1]{\hyperref[ainc]{{\normalfont(aInc){\ensuremath{_{#1}}}}}}
\newcommand{\adec}[1]{\hyperref[adec]{{\normalfont(aDec){\ensuremath{_{#1}}}}}}
\newcommand{\inc}[1]{\hyperref[ainc]{{\normalfont(Inc){\ensuremath{_{#1}}}}}}
\newcommand{\dec}[1]{\hyperref[adec]{{\normalfont(Dec){\ensuremath{_{#1}}}}}}
\newcommand{\wMAe}[1]{\hyperref[wMAe]{{\normalfont(wMA){\ensuremath{_{#1}}}}}}
\begin{document}

\title{Gradient estimates for parabolic problems with Orlicz growth and discontinuous coefficients}

\author{Jehan Oh}

\address{Department of Mathematics, Kyungpook National University, 80 Daehakro, Bukgu, Daegu 41566, Republic of Korea}
\email{jehan.oh@knu.ac.kr}

\author{Jihoon Ok}
\address{Department of Mathematics, Sogang University, 35 Baekbeom-ro, Mapo-gu, Seoul 04107, Republic of Korea}
\email{jihoonok@sogang.ac.kr}

\thanks{J.Oh is supported by NRF-2020R1A4A1018190. J. Ok is supported by the Sogang University Research Grant of 202010022.01.}

\subjclass[2010]{35K55; 35B65; 46E30}



\keywords{Degenerate parabolic equations,  general growth, discontinuous coefficients, Calder\'on--Zygmund estimates, Orlicz spaces}

\begin{abstract}
We obtain Calder\'on--Zygmund type estimates for parabolic equations with Orlicz growth, where nonlinearities involved in the equations may be discontinuous for the space and time variables. In addition, we consider parabolic systems with the Uhlenbeck structure.
\end{abstract}

\maketitle


\section{Introduction}\label{sec1}

We study the local regularity theory for weak solutions to the following parabolic equations  with  a general Orlicz growth condition:
\begin{equation}\label{maineq}
\partial_t u-\mathrm{div}\, A(z,Du) = -\div \left(\frac{g(|F|)}{|F|}F\right)\quad \text{in }\ \Omega_I:=\Omega\times I,
\end{equation}
where $\Omega\subset\R^n$ ($n\geq 2$) is an open set, 
$I\subset\R$ is an interval, 
$z=(x,t)\in \Omega\times I=\Omega_I$, 
$u$ is a real valued function,
and $Du\in \R^n$ is the gradient of $u$ with respect to the space variable $x$ (i.e., $Du=D_x u$).
Here, $g:[0,\infty)\to [0,\infty)$ where $g \in C([0,\infty))\cap C^1((0,\infty))$ and  $g(0)=0$ satisfies that  
\begin{equation}\label{phi-condition}
p-1 \le \frac{s g'(s)}{g(s)} \le q-1, \ \ \ ^\forall s > 0 \quad \text{for some }\ \frac{2n}{n+2} < p  \leq q,
\end{equation}
and   $A:\Omega_I\times \R^n \to \R^{n}$ where $A(z,\cdot)\in C^1(\R^n\setminus\{0\})$ for all $z\in \Omega_I$ satisfies that
\begin{equation}\label{A-condition1}
|A(z,\xi)|+ |\xi||D_\xi A(z,\xi)| \leq \Lambda g(|\xi|)
\end{equation}
 and
\begin{equation}\label{A-condition2}
D_\xi A(z,\xi) \eta \cdot \eta  \geq \nu g'(|\xi|)|\eta|^2
\end{equation}
for all $z\in\Omega_I$, $\xi,\eta\in \R^{n}\setminus\{0\}$ and for some $0<\nu\le \Lambda$, where $D_\xi A$ is the gradient of $A$ with respect to $\xi$. We further define the following:
\begin{equation}\label{phidef}
\phi(s):=\int_{0}^s g(\sigma)\,d\sigma, \quad s\ge 0.
\end{equation}  
The prototype of \eqref{maineq} is the following $g$-Laplace type equation with a coefficient:
\begin{equation}\label{prototype}
\partial_t u-\mathrm{div}\left(a(z)\frac{g(|Du|)}{|Du|}Du\right) = -\div \left(\frac{g(|F|)}{|F|}F\right)\ \  \left(\text{i.e.,}\ A(z,\xi)=a(z) \frac{g(|\xi|)}{|\xi|} \xi\right),
\end{equation}
 where $\nu \le a(\cdot)\le \Lambda$. In particular, if we set $g(s)=s^{p-1}$ and $a(\cdot)\equiv 1$,
it becomes a parabolic $p$-Laplace equation. 
The lower bound $\frac{2n}{n+2}$ of $p$ in  \eqref{phi-condition} is generally assumed even in the regularity theory for parabolic $p$-Laplace problems (see \cite{DiBenedetto_book} and \cite{AcerMin07,DiBeFrie84,KiLewis00}). 
Moreover, if $1<p\le\frac{2n}{n+2}$, even the boundedness of a weak solution requires an additional integrability assumption (see \cite{DiBenedetto_book}).

Under the above setting, we say function $u$ is a local weak solution to \eqref{maineq} if 
$u\in L^\infty_{\mathrm{loc}}(I,L^2_{\mathrm{loc}}(\Omega))\cap L^1_{\mathrm{loc}}(I,W^{1,1}_{\mathrm{loc}}(\Omega))$ 
with $
\phi(|Du|)\in L^1_{\mathrm{loc}}(I,L^1_{\mathrm{loc}}(\Omega))$ and it satisfies 
\begin{equation}\label{weakform}
-\int_{\Omega_I} u \,\partial_t \zeta\, dz +\int_{\Omega_I} A(z,Du)\cdot D\zeta \,dz =\int_{\Omega_I} \frac{g(|F|)}{|F|}F\cdot D\zeta \,dz
\end{equation}
for all $\zeta\in C^\infty_0(\Omega_I)$.  
Then, the main result of this paper is to prove  that the following implication holds for every  weak $\Phi$-function $\psi=\psi(s)$ satisfying \ainc{p_1} and \adec{q_1} for some $1<p_1\le q_1$ (see Section~\ref{subsec2.2} for the definitions of the weak $\Phi$-function, \ainc{} and \adec{}):
\begin{equation}\label{implication}
\phi(|F|) \in L^\psi_{\mathrm{loc}}(\Omega_I) \ \ \Longrightarrow\ \ \phi(|Du|) \in L^\psi_{\mathrm{loc}}(\Omega_I).
\end{equation}
The results is demonstrated by obtaining a parabolic Calder\'on--Zygmund type estimate under a suitable discontinuous assumption of the nonlinearity $A$ for the  $z$ variable.

In the classical case in which $\phi(s)=s^p$, $g(s)=ps^{p-1}$, and $\psi(s)=s^q$, where $p>\frac{2n}{n+2}$ and  $q>1$, the implication of \eqref{implication} was proved by \cite{AcerMin07}. The the proof took advantage of the higher integrability in \cite{KiLewis00} and the Lipschitz regularity of parabolic $p$-Laplace problems in \cite{DiBeFrie84,DiBenedetto_book}, and the so-called large-$M$-inequality principle. We also refer to \cite{DiBeFrie85,DiBeGiaVe08,DiBeGiaVe_book} regarding the H\"older continuity and Harnack's inequality, and \cite{Baroni13,Bogelein14,BOR13} for Calder\'on--Zygmund type estimates. 

After determining these results for the classical case, research  has been and still is being conducted to obtain the regularity theory for parabolic problems with Orlicz growth. 
Lieberman first studied the systematic regularity theory for elliptic equations with Orlicz growth in \cite{Lieberman91} and later considered parabolic problems with Orlicz growth in \cite{Lieberman06}.  
For further regularity results for parabolic problems with Orlicz growth, we refer to \cite{BaLin17,Cho18-2,DieScharSchwa19,HasOkparahigh,HwangLie15-1,HwangLie15-2,Yao19} and related references. The implication of \eqref{implication} was proved in \cite{Cho18-2} for \eqref{maineq} with the special case $A(z,\xi)=\frac{g(|\xi|)}{|\xi|}\xi$ (i.e., \eqref{prototype} with $a(\cdot)\equiv 1$), and $p$ in \eqref{phi-condition} is greater than or equal to $2$. Hence, in this paper, we consider general nonlinearities that depend on $z$ and moreover may be discontinuous for $z$.  We also refer to \cite{Verde11,BC16,Cho18-1} for Calder\'on--Zygmund type estimates in the Orlicz setting for elliptic problems.

Ws introduce our main result. A parabolic cylinder $Q_{r,\rho}(z_0)$ where $z_0=(x_0,t_0)\in \R^n\times\R$ is denoted as $Q_{r,\rho}(z_0):=B_r(x_0)\times (t_0-\rho^2, t_0+\rho^2)$, where $B_r(x_0)$ is the open ball in $\R^n$ with the center $x_0$ and  radius $r>0$. For simplicity, we write $Q_{r}(z_0)=Q_{r,r}(z_0)$ and $Q_{r,\rho}=Q_{r,\rho}(z_0)$ and $Q_{r}=Q_{r}(z_0)$, if the center is obvious.  The main regularity assumption on $A$ is the following.
\begin{definition}\label{def:vanishing}
Let $\delta,R>0$. $A:\Omega_I\times \R^n\to \R^n$ is  \textit{$(\delta,R)$-vanishing} if the following holds for all $Q_{r,\rho}\subset \Omega_I$ with $r,\rho\in (0,R]$:
\[
\fint_{Q_{r,\rho}} \theta(A; Q_{r,\rho})(z)\,dz \le \delta,
\]
where
\[
\theta(A; Q_{r,\rho})(z):=\sup_{\xi\in\R^n \setminus \{ 0\}} \frac{|A(z,\xi)- (A(\cdot,\xi))_{Q_{r,\rho}}|}{g(|\xi|)},
\ \ \text{and}\ \ (A(\cdot,\xi))_{Q_{r,\rho}}=\fint_{Q_{r,\rho}} A(z,\xi)\, dz.
\]
\end{definition}
We note that from \eqref{A-condition1}, $\theta \le 2\Lambda$; hence the $(\delta,R)$-vanishing condition implies that for any $\kappa>1$,
\begin{equation}\label{property_vanishing}
\fint_{Q_{r,\rho}} \left[ \theta(A; Q_{r,\rho})(z)\right]^{\kappa} dz \le (2\Lambda)^{\kappa-1} \delta
\quad \text{for all } \ Q_{r,\rho}\subset \Omega_I \ \text{ with }\ r,\rho\in (0,R].
\end{equation}

 In the special case in which $A(z,\xi)=a(x)b(t)A_0(\xi)$, where $z=(x,t)$, the $(\delta,R)$-vanishing condition can be implied by the local BMO(bounded mean oscillation) conditions of $a(\cdot)$ and $b(\cdot)$. Next, we state the main theorem in this paper.

\begin{theorem}\label{mainthm}
Let $g:[0,\infty)\to [0,\infty)$ with $g \in C([0,\infty))\cap C^1((0,\infty))$ and  $g(0)=0$ satisfy  \eqref{phi-condition}, and $A:\Omega_I\times \R^n\to\R^n$ with $A(z,\cdot)\in C^1(\R^n\setminus\{0\})$ for all $z\in\Omega_I$ do \eqref{A-condition1} and \eqref{A-condition2}. 
In addition, let $F\in L^\phi_{\mathrm{loc}}(\Omega_I;\R^n)$ with $\phi$ defined in \eqref{phidef}, and $\psi:[0,\infty)\to [0,\infty)$ be a weak $\Phi$-function satisfying \ainc{p_1} and \adec{q_1} for some $1<p_1\le q_1$ with constant $L\ge 1$. 
There exists small $\delta=\delta(n,\nu,\Lambda,p,q,p_1,q_1,L,g(1),\psi(1))>0$ such that if $A$ is $(\delta,R_0)$-vanishing for some $R_0>0$ and $u$ is a local weak solution to \eqref{maineq}, then we have the following implication:
$$
\phi(|F|) \in L^\psi_{\mathrm{loc}}(\Omega_I) \ \ \Longrightarrow\ \ \phi(|Du|) \in L^\psi_{\mathrm{loc}}(\Omega_I)
$$
with the following estimate: for any $Q_{2R}=Q_{2R}(z_0)\Subset \Omega_I $ with $R\le R_0$,
\[
\begin{split}
\fint_{Q_R} \psi(\phi(|Du|)) \,dz & \leq c \left[ \Psi \bigg( \fint_{Q_{2R}} \left[ \phi(|Du|) + \phi(|F|) \right] dz \bigg) \right] \fint_{Q_{2R}} \phi(|Du|) \, dz \\
& \qquad + c \fint_{Q_{2R}} \psi(\phi(|F|)) \, dz
\end{split}
\]
for some $c=c(n,\nu,\Lambda,p,q,p_1,q_1,L,g(1),\psi(1))>0$, where $Q_R=Q_R(z_0)$, $\Psi(s):=( \psi_1\circ\phi\circ \mathcal{D}^{-1})(s)$,  $\psi_1(s):= \frac{\psi(s)}{s}$, and $\mathcal{D}^{-1}$ is the inverse of 
\begin{equation}\label{D-def}
\mathcal{D}(s):=\min\{s^2,\phi(s)^{\frac{n+2}{2}}s^{-n}\}=\min\{1,\phi_2(s)^{\frac{n+2}{2}}\}s^2, \quad \text{where }\ \phi_2(s):=\frac{\phi(s)}{s^2}.
\end{equation}  
\end{theorem}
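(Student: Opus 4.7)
\medskip

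\noindent\textbf{Proof proposal.} My plan is to follow the standard Calder\'on--Zygmund machinery for degenerate/singular parabolic problems, but carried out in intrinsic cylinders adapted to the Orlicz function $\phi$ and then summed against the outer weak $\Phi$-function $\psi$. The function $\mathcal{D}$ in \eqref{D-def} is precisely the scaling factor that makes DiBenedetto--type intrinsic cylinders of the form $Q_{r,\rho}^\lambda$ with $\rho^2=\mathcal D(\lambda)/\lambda^2 \cdot r^2$ (up to constants) behave as they would for the parabolic $p$-Laplacian, so it should drive all the stopping-time choices.

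\medskip

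\noindent\emph{Step 1: Comparison estimates on intrinsic cylinders.} On a fixed cylinder $Q=Q_{r,\rho}^\lambda\subset\Omega_I$ chosen so that the average of $\phi(|Du|)+\phi(|F|)$ is comparable to $\phi(\lambda)$, I would perform two comparisons. First I compare $u$ with the weak solution $v$ of the $F$-free equation $\partial_t v-\div A(z,Dv)=0$ on $Q$ with $v=u$ on the parabolic boundary; standard energy/testing arguments plus \eqref{phi-condition}--\eqref{A-condition2} give a bound of $\fint_Q \phi(|Du-Dv|)\,dz$ in terms of $\fint_Q \phi(|F|)\,dz$. Next I compare $v$ with the solution $w$ of the frozen-coefficient problem $\partial_t w-\div \bar A(Dw)=0$ on a slightly smaller $Q'$, where $\bar A(\xi):=(A(\cdot,\xi))_{Q}$; the bound on $\fint_{Q'} \phi(|Dv-Dw|)\,dz$ comes from the \emph{$(\delta,R_0)$-vanishing} hypothesis via \eqref{property_vanishing} and H\"older's inequality (using higher integrability of $Dv$, which we inherit from Kinnunen--Lewis--type estimates available in the Orlicz setting, cf.\ \cite{HasOkparahigh}). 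The limit map $w$ obeys a Lipschitz/$L^\infty$ estimate for $\phi(|Dw|)$, i.e.\ $\sup_{Q''}\phi(|Dw|) \le c\,\fint_{Q'} \phi(|Dw|)\,dz$, coming from Lieberman--type regularity \cite{Lieberman06,HwangLie15-1}.

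\medskip

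\noindent\emph{Step 2: Good-$\lambda$ / density estimate.} With the three estimates in hand, I would set up a Vitali/Calder\'on--Zygmund stopping-time argument on the intrinsic cylinders. Fix a threshold $\lambda_0$ so that $\fint_{Q_{2R}}\phi(|Du|)\,dz \ll \phi(\lambda_0)$, and for $\lambda \ge A\lambda_0$ (with a large constant $A$) exit-time arguments produce a family of disjoint intrinsic cylinders $\{Q^j\}$ on which $\fint_{Q^j}\phi(|Du|)\,dz = \phi(\lambda_j)$ (with $\lambda_j$ comparable to $\lambda$). The comparison estimates together with the Lipschitz bound for $Dw$ give, for any $\epsilon>0$ and a suitable $B=B(\epsilon)$,
\[
|\{\phi(|Du|)>B\phi(\lambda)\}\cap Q^j| \le \epsilon\,|Q^j| + c\,|\{\phi(|F|)>\eta\phi(\lambda)\}\cap Q^j|,
\]
with $\delta$ chosen small (this is where the smallness of $\delta$ in the theorem is used, and is the main technical obstacle together with the higher-integrability / Lipschitz pair).

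\medskip

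\noindent\emph{Step 3: Summation in the weak $\Phi$-function $\psi$.} To convert the level-set inequality above into the announced $L^\psi$ bound I would use the layer-cake formula for $\psi\circ\phi(|Du|)$ together with \ainc{p_1}, \adec{q_1} of $\psi$; these doubling-type properties allow me to absorb the constants $B,\eta$ and to sum the geometric series arising from the stopping levels $\lambda\sim B^k\lambda_0$. The combination $\Psi=\psi_1\circ\phi\circ\mathcal D^{-1}$ appearing in the statement is exactly what arises when one expresses the base-level contribution $\phi(\lambda_0)\fint_{Q_{2R}}\phi(|Du|)$ in $\psi$-units after rescaling by the intrinsic factor $\mathcal D$. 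Choosing $\epsilon=\epsilon(p_1,q_1,L)$ small enough so that the $\psi$-summed self-term can be absorbed into the left-hand side, and tracking the dependencies through the intrinsic scaling, yields the stated estimate.

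\medskip

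\noindent\emph{Main obstacle.} The delicate point is the second comparison together with the choice of $\delta$: one must pass the small-BMO smallness through H\"older's inequality in the Orlicz framework with an exponent that is uniform in the intrinsic scaling, and simultaneously keep the Lipschitz constant of the limit problem independent of $\lambda$. This is where the singular range $\tfrac{2n}{n+2}<p\le 2$ creates the most difficulty, since there the intrinsic cylinder has $\rho\gg r$ and the standard energy estimate needs to be carried out with $\mathcal D$-scaled test functions to avoid losing powers in $\lambda$.
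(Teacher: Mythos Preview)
Your plan is essentially the paper's own strategy: two comparisons on intrinsic cylinders (energy estimate against the $F$-free solution, then against the frozen-coefficient solution via higher integrability and the $(\delta,R_0)$-vanishing condition), a Vitali stopping-time on intrinsic cylinders calibrated by $\mathcal D$, a Lipschitz bound for the reference solution, and a layer-cake summation against $\psi$. The identification of the main obstacle is also accurate.

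Two technical points where your outline deviates from (and is slightly looser than) the paper's execution, which you should keep in mind. First, the paper does not derive a \emph{measure}-level good-$\lambda$ inequality as in your Step~2; it follows the Acerbi--Mingione ``large-$M$-inequality'' and obtains directly an \emph{integral} inequality of the form
\[
\int_{\{|Du|>cN_0\lambda\}\cap Q_{r_1}} \phi(|Du|)\,dz \ \lesssim\ \epsilon \int_{\{|Du|>\lambda/4\}\cap Q_{r_2}} \phi(|Du|)\,dz + \frac{\epsilon}{\delta}\int_{\{|F|>\delta\lambda/4\}\cap Q_{r_2}} \phi(|F|)\,dz,
\]
valid for two nested radii $R\le r_1<r_2\le 2R$. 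This is obtained after an explicit rescaling of each stopping cylinder to a fixed $Q_4(0)$, so that the comparison lemmas and the Lipschitz constant are applied on a $\lambda$-independent domain; this is how the paper keeps all constants uniform in $\lambda$ rather than tracking $\lambda$-dependence through estimates on the intrinsic cylinders themselves.

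Second, your Step~3 says you will ``absorb the $\psi$-summed self-term into the left-hand side,'' but that term is not known a priori to be finite. The paper handles this by (i) truncating $|Du|$ to $|Du|_k=\min\{|Du|,k\}$ so that all integrals are finite, (ii) integrating the large-$M$ inequality against the Stieltjes measure $d\bigl(\psi(\phi(\lambda))/\phi(\lambda)\bigr)$ rather than summing over geometric levels, and (iii) using a standard Giusti-type iteration lemma over the radii $r_1<r_2$ (not a direct absorption) to remove the self-term, before letting $k\to\infty$ via Fatou. Your geometric-level summation would also work, but you still need either truncation or an iteration over radii to close; a bare absorption argument is circular here.
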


\begin{remark} Regarding the function $\mathcal D$, since $\phi$ satisfies \inc{p} with $p>\frac{2n}{n+2}$, $\mathcal D$ is strictly increasing; hence, the inverse $\mathcal D^{-1}$ is well-defined. In the case of the power functions, $g(s)=ps^{p-1}$ (hence, $\phi(s)=s^p$) and $\psi(s)=s^q$,  we have  $\mathcal{D}(s)=\min\{s^2,s^{\frac{p(n+2)}{2}-n}\}$
and, thus,
\[
\Psi(s)
= \left(\max\{s^{\frac{p}{2}},s^{\frac{2p}{p(n+2)-2n}}\}\right)^{q-1}.
\]
Therefore, our result exactly implies the known results for the $p$-growth case (see, e.g., \cite{BoDuMin11}). 
\end{remark}
\begin{remark}
We present examples of a function $g$ (and, hence, $\phi$ defined in \eqref{phidef}) which are not just power functions.  Simple examples are $g(s)= s^{p-1}[\log(1+s)]^q$ and $g(s)=\int_0^s\min\{u^{p-2},u^{q-2}\}\,du$. The following example is quite complicated.
Let $p=2-\frac{3}{2n}$ and $q=2+\frac{3}{2n}$, and set $\kappa=(q-p)/3>0$. We define the sequence $s_k=2^{2^k}$ for $k=0,1,2,\cdots$, and the function
$$
g(s) =  \left\{ \begin{array}{ll}
s^{p-1+\kappa}, & 0<s<1, \\
2^{(p-1+\kappa)(s-1)}, & 1 \leq s <2, \\
s_{2k+1}^{-\kappa} s^{q-1}, & s_{2k} \leq s < s_{2k+1}, \\
s_{2k+2}^{\kappa} s^{p-1}, & s_{2k+1} \leq s < s_{2k+2}.
\end{array} \right.
$$
It oscillates between degenerate and singular behavior (see \cite{BaLin17} for more details).
\end{remark}

Finally, we shall introduce techniques used in the proof of the theorem. Our approach is based on the so-called \textit{large-M-inequality principl}e that was used in the case in which $\phi(s)=s^p$ and $\psi(s)=s^q$ in \cite{AcerMin07} (we also refer to \cite{Min07} for its origin). In addition, we modify this principle in the Orlicz setting. A comparison estimate  is required between the main equation \eqref{maineq} and a homogeneous equation with a nonlinearity being independent of $z$. To our best knowledge, a higher integrability result  is essentially needed priori to obtain such a comparison estimate for partial differential equations with  BMO-type discontinuous coefficients. Recently, H\"ast\"o and the second author of this paper proved the higher integrability for parabolic problems with Orlicz growth, hence this leads us to prove the main theorem. Moreover, we also use the Lipschitz regularity of the homogenous problem that has been proved in \cite{BaLin17,DieScharSchwa19}.

The remaining paper is organized as follows. Section \ref{sec2} introduces the notation, 
Orlicz functions with related inequalities and function spaces, and preliminary results. In Section \ref{sec3}, we derive the comparison estimates.  In Section \ref{sec4}, we prove Theorem~\ref{mainthm}. Finally, we briefly discuss  the implication for parabolic systems with Orlicz growth in the final section, Section \ref{sec5}.


\section{Preliminaries}\label{sec2}

\subsection{Notation}\label{subsec2.1}

Let $w=(y,\tau)\in \mr^n\times \mr$ and $r>0$.  Then, $Q_r(w)=B_r(y)\times (\tau-r^2,\tau+r^2)$ is a usual parabolic cylinder with base $B_r(y):= \{x\in \R^n: |x-y|<r\}$. We further define a so-called intrinsic parabolic cylinder with $\lambda>0$ and  function $\phi:(0,\infty)\to(0,\infty)$ by
\[
Q^{\lambda}_{r}(w):=B_r(y)\times (\tau-\tfrac{r^2}{\phi_2(\lambda)},
\tau+\tfrac{r^2}{\phi_2(\lambda)}),\quad
\text{where }\ \phi_2(s):=\frac{\phi(s)}{s^2}.
\] 

A function $f:I \to \R$ with $I\subset \R$ is  \textit{almost increasing}  if, for some $L\ge 1$, $f(t)\leq Lf(s)$  for every $t,s\in I$ with $t<s$. If we set $L=1$, then we say that $f$ is increasing.  
Similarly, we define an \textit{almost decreasing} or decreasing function. 

For an integrable function $f: U\to \R^m$, $U\subset \R^N$, we define $(f)_{U} := \fint_U f\, dz := \frac{1}{|U|}\int_U f \,dz$, where $|U|$ is the Lebesgue measure of $U$ in $\R^N$. 

 We write $f\lesssim g$ if $f\le c g$ for some $c>0$, and $f\approx g$ if $c^{-1}g\le f\le c g$ for some $c\ge 1$.


\subsection{Orlicz functions}\label{subsec2.2}
Let $\phi:(0,\infty)\to[0,\infty)$ and $p,q>0$. 
We introduce the following conditions:
\begin{itemize}
\item[\normalfont(aInc)$_p$]\label{ainc} 
The map $(0,\infty)\ni s\mapsto \phi(s)/s^p$ is almost increasing with constant $L\geq 1$.
\vspace{0.2cm}
\item[\normalfont(aDec)$_q$]\label{adec} 
The map $(0,\infty)\ni s\mapsto \phi(s)/s^q$ is almost decreasing with constant $L\geq 1$.
\end{itemize}

In particular, when $L=1$ we use \inc{} and \dec{}, instead of \ainc{} and \adec{}, respectively.

From the above definition, we directly deduce that if  $\phi$ satisfies \ainc{p} and \adec{q} for some $p\le q$ then 
\[
 c^q L^{-1} \phi(s) \leq  \phi(cs)\leq  c^pL \phi(s)
 \quad \text{and}\quad 
 C^pL^{-1} \phi(s) \leq  \phi(Cs)\leq  C^qL \phi(s)
\]
for every  $s\in(0,\infty)$ and every $0<c <1 <C$. Moreover, if $p_1<p_2$, or $q_1<q_2$, then  \ainc{p_2} implies \ainc{p_1}, or \adec{q_1} does \adec{q_2}.  We use these properties many times later without explicit mention.

Next, we introduce the definition of a weak $\Phi$-function referring to \cite[Definition 2.1.3]{HarH_book}.
\begin{definition} 
The function $\phi:[0,\infty)\to[0,\infty]$ is a \emph{weak $\Phi$-function} 
if it is increasing with $\phi(0)=0$, $\displaystyle \lim_{s\to 0^+}\phi(s)=0$, $\displaystyle \lim_{s\to\infty}\phi(s)=\infty$ 
and it satisfies \ainc{1}.  Moreover, if a weak $\Phi$-function is left-continuous and convex, then it is a \emph{convex $\Phi$-function}. 
\end{definition}
If $\phi$ is a weak $\Phi$-function or a convex $\Phi$-function, we write $\phi\in \Phi_{\mathrm{w}}$ or $\phi\in \Phi_{\mathrm{c}}$, respectively. We notice that $\sqrt{\phi(s^2)}$ need not be 
convex even if $\phi$ is convex, but the \ainc{1} property is conserved. For a weak $\Phi$-function, we define its conjugate function as follows:
\[
\phi^*(r) := \sup_{s\in[0,\infty)} (rs-\phi(s)), \quad r\in[0,\infty).
\]
The definition directly implies Young's inequality:
\begin{equation}\label{Young_1}
rs \le \phi(s) + \phi^*(r), \quad r,s\in[0,\infty).
\end{equation}
We next introduce the properties of regular $\Phi$-functions.

\begin{proposition}\label{prop0} 
Let $\gamma>0$ and $0<p\le q$. Suppose that $\phi\in\Phi_{\mathrm{c}}\cap C^1([0,\infty))$.
\begin{itemize}
\item[(1)] 
If $\phi'$ satisfies \ainc{\gamma}, \adec{\gamma}, \inc{\gamma}, or \dec{\gamma}, then $\phi$ satisfies \ainc{\gamma+1}, \adec{\gamma+1}, \inc{\gamma+1}, or \dec{\gamma+1}, respectively, with the same constant $L\geq 1$. 

\item[(2)] 
If $\phi'$ satisfies \adec{\gamma} with constant $L\ge 1$, 
then
\[
\phi(s) \leq s\phi'(s)  \leq 2^{\gamma+1}L \phi(s)\quad \text{for all }\ s\in [0,\infty).
\]
\item[(3)] $\phi(s)$ satisfies \inc{p} and \dec{q} if and only if 
\begin{equation}\label{equiv2}
p \phi(s)\le s\phi'(s) \le q \phi(s)  \quad \text{for all }\ s\in [0,\infty).
\end{equation}
\item[(4)] 
$\phi^*(\phi'(s))\le s\phi'(s)$. 

\end{itemize}
\end{proposition}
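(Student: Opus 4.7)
My plan is to handle the four parts essentially independently, treating them as calculus exercises that exploit convexity of $\phi$ together with the fundamental theorem of calculus. For (1), the key device is the substitution $\sigma=us$ in $\phi(s)=\int_0^s\phi'(\sigma)\,d\sigma$, which gives the identity
\[
\frac{\phi(s)}{s^{\gamma+1}}=\int_0^1 u^\gamma\,\frac{\phi'(us)}{(us)^\gamma}\,du.
\]
If $r\mapsto\phi'(r)/r^\gamma$ is almost increasing with constant $L$, then for each fixed $u\in(0,1]$ and $t\le s$ one has $\phi'(ut)/(ut)^\gamma\le L\,\phi'(us)/(us)^\gamma$; integrating this pointwise inequality against the positive density $u^\gamma$ on $(0,1)$ yields $\phi(t)/t^{\gamma+1}\le L\,\phi(s)/s^{\gamma+1}$, with the constant \emph{exactly preserved}. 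Reversing the inequality gives the almost-decreasing statement, and setting $L=1$ recovers the \inc{\gamma+1} and \dec{\gamma+1} conclusions.

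For (2), the lower bound $\phi(s)\le s\phi'(s)$ is immediate from convexity: since $\phi'$ is nondecreasing, $\phi(s)=\int_0^s\phi'(\sigma)\,d\sigma\le s\phi'(s)$. For the matching upper bound, I would restrict the integral to $[s/2,s]$ and apply \adec{\gamma} to get $\phi'(\sigma)\ge L^{-1}(\sigma/s)^\gamma\phi'(s)\ge L^{-1}2^{-\gamma}\phi'(s)$ for $\sigma\in[s/2,s]$. Consequently $\phi(s)\ge\int_{s/2}^s\phi'(\sigma)\,d\sigma\ge L^{-1}2^{-\gamma-1}\,s\phi'(s)$, which rearranges to $s\phi'(s)\le 2^{\gamma+1}L\,\phi(s)$.

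Part (3) reduces to a derivative computation: $(\phi(s)/s^p)'=\bigl(s\phi'(s)-p\phi(s)\bigr)/s^{p+1}$, so the left inequality in \eqref{equiv2} is equivalent (since $\phi\in C^1$) to $s\mapsto\phi(s)/s^p$ being nondecreasing, which is \inc{p}; the same computation with $q$ in place of $p$ handles \dec{q}. For (4), I would apply the tangent-line inequality from convexity, $\phi(\sigma)\ge\phi(s)+\phi'(s)(\sigma-s)$ for all $\sigma\ge 0$, which rearranges to $\phi'(s)\sigma-\phi(\sigma)\le s\phi'(s)-\phi(s)\le s\phi'(s)$; taking the supremum in $\sigma$ on the left gives $\phi^*(\phi'(s))\le s\phi'(s)$. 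I do not foresee any genuine obstacle in this proposition; the only bookkeeping points are verifying in (1) that integrating against the positive density $u^\gamma\,du$ does not inflate $L$, and using the interval $[s/2,s]$ in (2) to produce the stated prefactor $2^{\gamma+1}$.
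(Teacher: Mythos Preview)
Your arguments are correct in all four parts. The paper itself only proves (3), deferring (1), (2) and (4) to \cite[Proposition~3.6]{HasOk}; for (3) the paper's argument is essentially the same as yours (it writes the difference quotient $\frac{\phi(s+h)-\phi(s)}{h}$ and passes to the limit, which is just another way of reading off the sign of $(\phi(s)/s^p)'$). Your self-contained treatments of (1), (2) and (4) are therefore a genuine addition relative to the paper's text: the substitution $\sigma=us$ in (1) is exactly the right device to transfer \ainc{\gamma}/\adec{\gamma} from $\phi'$ to $\phi$ without inflating $L$, the half-interval trick in (2) produces the stated constant $2^{\gamma+1}$ on the nose, and the tangent-line inequality in (4) is the standard one-line proof of $\phi^*(\phi'(s))\le s\phi'(s)$.
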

\begin{proof}
We only prove the statement in (3). The rest of the properties can be found in \cite[Proposition 3.6]{HasOk}. 

We first suppose that $\phi$ satisfies \inc{p} and \dec{q}. Then, for $s,h>0$ we have the following:
\[
\frac{\left(\frac{h+s}{s}\right)^p-1}{h}\phi(s)
\le \frac{\phi(s+h)-\phi(s)}{h} \le 
\frac{\left(\frac{h+s}{s}\right)^q-1}{h}\phi(s).
\]
Passing $h\to 0$, we have the inequalities in \eqref{equiv2}. For the converse, from \eqref{equiv2},  $\frac{d}{ds} (\frac{\phi(s)}{s^p})>0$ and $\frac{d}{ds} (\frac{\phi(s)}{s^q})<0$, which imply \inc{p} and \dec{q}, respectively.
\end{proof}

From the above proposition, if $g$ satisfies \eqref{phi-condition}, then $\phi$ defined in \eqref{phidef} satisfies \inc{p} and \dec{q}.
From \eqref{Young_1} and Proposition \ref{prop0}, for any $\epsilon \in (0,1)$,
\begin{equation}\label{Young_2}
rs \le 
\begin{cases} 
\epsilon \phi(s) + c(\epsilon) \phi^*(r)\\
\epsilon \phi^*(s) + c(\epsilon) \phi(r)
\end{cases}
 , \quad r,s\in[0,\infty),
\end{equation}
for some constant $c(\epsilon)=c(p,q,\epsilon)>1$.

For $\phi\in \Phi_{\mathrm{w}}$ satisfying \ainc{p} and \adec{q} for some $1<p\le q$ and $\Omega\subset \R^n$, we define the \textit{Orlicz space} as follows:  
\[
L^\phi(\Omega):= \left\{f\in L^0(\Omega): \int_\Omega \phi(|f(x)|)\,dx < \infty \right\},
\]
where $L^0(\Omega)$ is the set of measurable functions in $\Omega$. For more properties for $\Phi$-functions and Orlicz spaces, we refer to \cite{HarH_book}.

Next we introduce a vector field $V: \mr^n \to \mr^n$, which is defined by
\[
V(\xi) := \sqrt{\frac{g(|\xi|)}{|\xi|}} \, \xi,
\]
where $g\in C([0,\infty)) \cap C^1((0,\infty))$ satisfies \eqref{phi-condition}. For simplicity we assume that $g(1)=1$. Then we otain the following relation between the above vector field and the function $\phi$ defined by \eqref{phidef}.

\begin{lemma}[\cite{DieEtt08}]\label{lem:relation_V}
In the above setting, we have
\begin{equation}\label{relation_V_1}
g'(|\xi_1|+|\xi_2|) |\xi_1 - \xi_2|^2 \approx \left| V(\xi_1) - V(\xi_2) \right|^2
\end{equation}
and
\begin{equation}\label{relation_V_2}
\phi(|\xi_1-\xi_2|) \lesssim \kappa \left| V(\xi_1) - V(\xi_2) \right|^2 + c(\kappa)\phi(|\xi_2|)
\end{equation}
for all $\xi_1, \xi_2 \in \mr^n$ and $\kappa>0$, where the hidden constants depend only on $p$, $q$ and $n$.
In addition,
\begin{equation}\label{relation_V_3}
\left| V(\xi) \right|^2 \approx \phi(|\xi|)
\end{equation}
holds for all $\xi \in \mr^n$.
\end{lemma}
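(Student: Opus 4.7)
The plan is to exploit the radial form $V(\xi) = h(|\xi|)\,\xi/|\xi|$ with $h(s) := \sqrt{s\, g(s)}$, and to reduce everything to two structural identities that follow from \eqref{phi-condition} via Proposition~\ref{prop0}(3): $s g'(s) \approx g(s)$ and $s g(s) \approx \phi(s)$. All comparison constants below depend only on $n,p,q$. Statement \eqref{relation_V_3} is then immediate from the definition: $|V(\xi)|^2 = |\xi|\,g(|\xi|) \approx \phi(|\xi|)$.

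For \eqref{relation_V_1}, I would first compute $DV(\xi)$ directly. After rotating so that $\xi$ is axis-aligned, $DV(\xi)$ is diagonal with one eigenvalue $h'(|\xi|)$ in the radial direction and $(n-1)$ eigenvalues $h(|\xi|)/|\xi|$ in tangential directions. Using $2 h\, h' = g + s g' \approx g$ and $h^2/s^2 = g/s \approx g'$, both eigenvalues turn out to be comparable to $\sqrt{g'(|\xi|)}$, whence $|DV(\xi)\eta|^2 \approx g'(|\xi|)\,|\eta|^2$ for every $\eta$. Writing
\[
V(\xi_1) - V(\xi_2) = \int_0^1 DV(\xi_t)(\xi_1 - \xi_2)\,dt, \qquad \xi_t := \xi_2 + t(\xi_1 - \xi_2),
\]
the upper bound $|V(\xi_1) - V(\xi_2)|^2 \lesssim g'(|\xi_1|+|\xi_2|)\,|\xi_1-\xi_2|^2$ follows from Cauchy--Schwarz once one establishes the scalar estimate $\int_0^1 g'(|\xi_t|)\,dt \lesssim g'(|\xi_1|+|\xi_2|)$, which reduces via $g'(s) \approx \phi(s)/s^2$ to the \ainc{p-2}--\adec{q-2} behavior of $s\mapsto\phi(s)/s^2$ and a short case analysis on whether the segment passes near the origin. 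For the matching lower bound I would follow the standard route of invoking the monotonicity identity $(A(\xi_1) - A(\xi_2))\cdot(\xi_1-\xi_2) \approx |V(\xi_1) - V(\xi_2)|^2$ with $A(\xi) := g(|\xi|)\,\xi/|\xi|$ and bounding its left side from below using the convexity of $\phi$ together with $s^2 g'(s) \approx \phi(s)$.

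For \eqref{relation_V_2}, a case analysis on the relative size of $|\xi_1-\xi_2|$ and $|\xi_2|$ suffices. When $|\xi_1 - \xi_2| \le 2|\xi_2|$, \adec{q} of $\phi$ yields $\phi(|\xi_1 - \xi_2|) \lesssim \phi(|\xi_2|)$, which is absorbed into the $c(\kappa)\,\phi(|\xi_2|)$ term. Otherwise, the reverse triangle inequality gives $|\xi_1|+|\xi_2| \approx |\xi_1-\xi_2|$, so \eqref{relation_V_1} combined with $s^2 g'(s) \approx \phi(s)$ gives $\phi(|\xi_1 - \xi_2|) \lesssim |V(\xi_1) - V(\xi_2)|^2$, which is stronger than required. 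Interpolating the two cases via a trivial Young-type inequality then produces \eqref{relation_V_2} with an arbitrary $\kappa \in (0,1)$ and an explicit constant $c(\kappa)$ depending only on $p,q,\kappa$.

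The main obstacle will be the lower bound in \eqref{relation_V_1}: the radial and tangential eigenvalues of $DV$ coincide only up to a $p,q$-dependent constant, and $g'$ need not be monotone when $p < 2 < q$, so one cannot simply bound the integrand of the line-integral representation pointwise from below. The monotonicity-identity route circumvents this geometric difficulty but relies on convexity of $\phi$, which is available here since $\phi' = g$ is nondecreasing under \eqref{phi-condition}; this is precisely the strategy carried out in \cite{DieEtt08}.
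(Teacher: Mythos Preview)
The paper does not prove this lemma at all; it is quoted directly from \cite{DieEtt08}. Your outline is a faithful sketch of the argument in that reference, and \eqref{relation_V_3} and \eqref{relation_V_2} are handled correctly.

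There is, however, a circularity in your treatment of the lower bound in \eqref{relation_V_1}. You propose to invoke the monotonicity identity $(A(\xi_1)-A(\xi_2))\cdot(\xi_1-\xi_2)\approx |V(\xi_1)-V(\xi_2)|^2$ and then bound its left side from below. But the direction $(A(\xi_1)-A(\xi_2))\cdot(\xi_1-\xi_2)\lesssim |V(\xi_1)-V(\xi_2)|^2$, which is what you would need, is neither elementary nor available from convexity of $\phi$ alone; in this paper that inequality (see \eqref{relation_A_V}) is \emph{derived} from \eqref{relation_V_1}, and in \cite{DieEtt08} it is proved simultaneously with \eqref{relation_V_1}, not before it. The clean fix is to upgrade your scalar estimate to the two-sided form
\[
\int_0^1 \frac{g(|\xi_t|)}{|\xi_t|}\,dt \;\approx\; \frac{g(|\xi_1|+|\xi_2|)}{|\xi_1|+|\xi_2|}\;\approx\; g'(|\xi_1|+|\xi_2|)
\]
(this is the key Lemma in \cite{DieEtt08}; the lower bound also comes from a case analysis on whether the segment $\{\xi_t\}$ stays away from the origin). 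Then the lower bound in \eqref{relation_V_1} follows directly by projecting onto $(\xi_1-\xi_2)/|\xi_1-\xi_2|$: since $DV(\xi)$ is symmetric positive definite with all eigenvalues $\approx \sqrt{g'(|\xi|)}$, one gets
\[
|V(\xi_1)-V(\xi_2)|\;\ge\;\frac{(V(\xi_1)-V(\xi_2))\cdot(\xi_1-\xi_2)}{|\xi_1-\xi_2|}
\;\gtrsim\;|\xi_1-\xi_2|\int_0^1 \sqrt{g'(|\xi_t|)}\,dt,
\]
and the same two-sided integral lemma (applied with $\sqrt{g(s)/s}$ in place of $g(s)/s$, which satisfies the same almost-monotonicity hypotheses) finishes the job. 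With this adjustment your outline matches the route in \cite{DieEtt08}.
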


Let the vector field $A(z,\xi)$ satisfy \eqref{A-condition1} and \eqref{A-condition2} with \eqref{phi-condition}.
Then, from \eqref{A-condition2}, we have the following monotonicity property of the vector field $A(z,\xi)$ with respect to the gradient variable $\xi$:
\begin{equation}\label{relation_A_V}
\left| V(\xi_1) - V(\xi_2) \right|^2 \overset{\eqref{relation_V_1}}{\approx} g'(|\xi_1|+|\xi_2|) |\xi_1 - \xi_2|^2  \lesssim \left( A(z,\xi_1) - A(z,\xi_2) \right) \cdot (\xi_1 - \xi_2) 
\end{equation}
for all $z\in\Omega_I$ and $\xi_1, \xi_2 \in \mr^n$, where the hidden constants depend on $p$, $q$, $n$, and $\nu$.

\subsection{Preliminary regularity results}

In this subsection we introduce regularity results for homogeneous parabolic problems with Orlicz growth. The first result is the higher integrability of a weak solution.

\begin{theorem}[Higher integrability, {\cite[Theorem 1.5]{HasOkparahigh}}]\label{thmhigh}
Let $\phi:[0,\infty)\to[0,\infty)$ be a weak $\Phi$-function satisfying \ainc{p} and \adec{q} 
with constant $L\geq 1$ and let $w$ be a  weak solution to 
\[
w_t - \mathrm{div}\, A(z,Dw) =0 \quad \text{in }\ Q_{2r}, 
\]
where $A: Q_{2r}\times \R^n \to \R^n$ satisfies that
\[
|A(z,\xi)|\le \Lambda \frac{\phi(|\xi|)}{|\xi|} \quad \text{and}\quad A(z,\xi) \cdot \xi \ge \nu \phi(|\xi|)
\]
for every $z\in Q_{2r}$, $\xi\in \R^n \setminus \{0\}$ and for some $0<\nu\le \Lambda$. There exists $\sigma=\sigma(n,\nu,\Lambda,p,q,L,\phi(1))>0$ such that $\phi(|Dw|)\in L^{1+\sigma}(Q_r)$ with the following estimate:
\[
\fint_{Q_{r}} \phi(|Dw|)^{1+\sigma}\,dz \leq c \left[(\phi \circ \mathcal{D}^{-1})\bigg(\fint_{Q_{2r}}\phi(|Dw|)\,dz\bigg)\right]^{\sigma} \fint_{Q_{2r}}\phi(|Dw|)\,dz
\]
for some $c=c(n,\nu,\Lambda,p,q,L,\phi(1))>0$, where $\mathcal{D}$ is given in \eqref{D-def} and $\mathcal{D}^{-1}$ is the inverse of $\mathcal D$.
\end{theorem}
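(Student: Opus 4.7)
The plan is to prove Theorem~\ref{thmhigh} via the intrinsic-cylinder Gehring scheme of Kinnunen--Lewis for parabolic $p$-Laplace systems, adapted to Orlicz growth. Pairing a Caccioppoli inequality with a parabolic Sobolev--Poincar\'e inequality on the intrinsic cylinders $Q^{\lambda}_{r}(w)$ of time-scale $r^{2}/\phi_{2}(\lambda)$ will yield a reverse H\"older inequality, from which Gehring's lemma produces the exponent $\sigma$. The role of the function $\mathcal{D}$ will be to encode the correct intrinsic coupling, so that after rewriting the estimate on the standard cylinder $Q_{r}$ the composition $\phi\circ\mathcal{D}^{-1}$ appears on the right-hand side.

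First, testing the equation by $(w-k)\eta^{2}\zeta$ with spatial and temporal cut-offs supported in $Q^{\lambda}_{2r}$ and using \eqref{A-condition1}--\eqref{A-condition2} together with \eqref{Young_2}, I would establish the Caccioppoli bound
\[
\sup_{t}\fint_{B_{r}}\tfrac{|w-k|^{2}}{r^{2}}\,dx \;+\; \fint_{Q^{\lambda}_{r}}\phi(|Dw|)\,dz \;\le\; c\,\fint_{Q^{\lambda}_{2r}}\phi\bigl(\tfrac{|w-k|}{r}\bigr)\,dz,
\]
with $k$ a slicewise mean, where the sup-term carries the intrinsic factor $1/\phi_{2}(\lambda)$ coming from the time-rescaling. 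Then a parabolic Sobolev--Poincar\'e inequality on $Q^{\lambda}_{2r}$, applied after splitting $|Dw|$ into a subintrinsic regime $|Dw|\le\lambda$ and a superintrinsic regime $|Dw|>\lambda$, would yield
\[
\fint_{Q^{\lambda}_{2r}}\phi\bigl(\tfrac{|w-k|}{r}\bigr)\,dz \;\le\; c\,\bigg(\fint_{Q^{\lambda}_{4r}}\phi(|Dw|)^{\theta}\,dz\bigg)^{\!1/\theta} \;+\; c\,\phi(\lambda)
\]
for some $\theta=\theta(n,p,q)\in(0,1)$; absorption of the Caccioppoli sup-term is precisely what forces the intrinsic choice of time-scale. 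Fixing a threshold $\mu_{0}$ proportional to $(\phi\circ\mathcal{D}^{-1})\bigl(\fint_{Q_{2r}}\phi(|Dw|)\,dz\bigr)$, for each $\mu\ge\mu_{0}$ and each Lebesgue point $z_{0}\in\{\phi(|Dw|)>\mu\}$ a stopping-time argument selects $\lambda(z_{0})$ and a maximal radius $r_{z_{0}}$ with $\fint_{Q^{\lambda(z_{0})}_{r_{z_{0}}}}\phi(|Dw|)\,dz\approx\phi(\lambda(z_{0}))\approx\mu$; a Vitali covering of the super-level set, combined with the two inequalities above, produces a level-set reverse H\"older estimate, and Gehring-type iteration yields the exponent $\sigma$.

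The conversion from intrinsic to standard geometry contributes the factor $\phi_{2}(\lambda)^{(n+2)/2}$ which, paired with $\phi(\lambda)$, equals $\mathcal{D}(\lambda)$ in the degenerate regime and $\lambda^{2}$ in the singular regime, so that inverting through $\mathcal{D}$ produces the composition $\phi\circ\mathcal{D}^{-1}$ in the final bound. The main obstacle will be the Sobolev--Poincar\'e step together with this intrinsic coupling: $\phi$ lacks the homogeneity of $s\mapsto s^{p}$, so the sub- and super-intrinsic splittings must be reconciled using only \ainc{p} and \adec{q}, with the structural constants remaining universal under intrinsic rescaling, and the subquadratic/superquadratic alternative encoded by the minimum defining $\mathcal{D}$ must emerge correctly from the interplay of the time-scaling $\phi_{2}(\lambda)$ and the parabolic Sobolev exponent $(n+2)/n$.
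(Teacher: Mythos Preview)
The paper does not contain a proof of Theorem~\ref{thmhigh}: it is stated as a preliminary result quoted verbatim from \cite[Theorem~1.5]{HasOkparahigh} and used as a black box in the comparison estimates of Section~\ref{sec3}. There is therefore nothing in this paper to compare your argument against.

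That said, your sketch is a faithful outline of how the result is actually proved in \cite{HasOkparahigh}: a Caccioppoli inequality on intrinsic cylinders $Q^{\lambda}_{r}$, a parabolic Sobolev--Poincar\'e step, a stopping-time/Vitali covering of the super-level sets of $\phi(|Dw|)$, and a Gehring-type iteration. You have correctly identified both the role of $\mathcal{D}$ as the function encoding the intrinsic-to-standard volume conversion and the central technical difficulty, namely that the lack of homogeneity of $\phi$ forces the sub-/super-intrinsic splitting to be handled using only \ainc{p} and \adec{q}. What your outline leaves implicit is the precise mechanism by which the exit-time $\lambda$ is fixed so that the sup-in-time energy term from Caccioppoli can be absorbed (this is where $\phi_2(\lambda)$ enters and where the two branches of the minimum in $\mathcal{D}$ separate), and the bookkeeping that keeps all constants independent of $\lambda$ under rescaling; both of these require real work in the Orlicz setting and are carried out in detail in \cite{HasOkparahigh}.
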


The next result is the Lipschitz regularity for homogeneous equations with nonlinearity independent of $z$. For Cauchy--Dirichlet problems, we refer to {\cite[Theorem 1.2]{BaLin17}}. Using this result with mollification, we have the following interior Lipschitz regularity  for general problems.

\begin{theorem}[Lipschitz regularity]\label{thmLip}
Let $g:[0,\infty)\to [0,\infty)$ with $g \in C([0,\infty))\cap C^1((0,\infty))$ and  $g(0)=0$ satisfy \eqref{phi-condition}, and $A_0: \R^n\to\R^n$ with $A_0\in C^1(\R^n\setminus\{0\})$ do \eqref{A-condition1} and \eqref{A-condition2}, replacing $A(z,\xi)$ with $A_0(\xi)$.  If  $v$ is a weak solution to
\[
v_t -\mathrm{div}\, A_0(Dv) =0 \quad \text{in }\ Q_{3r} ,\\
\]
then $v$ belongs to $L^\infty(Q_r)$ with the following estimate:
\[
\|Dv\|_{L^\infty(Q_r)} \leq c \left(\fint_{Q_{2r}} [\phi(|Dv|)+1] \, dz\right)^{\max\{\frac{1}{2},\frac{2}{p(n+2)-2n}\}}
\]
for some $c=c(n,\nu,\Lambda,p,q, g(1))>0$.
\end{theorem}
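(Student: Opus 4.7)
The plan is to deduce the interior Lipschitz estimate from the Cauchy--Dirichlet version \cite[Theorem 1.2]{BaLin17} by a regularization/approximation argument. That result gives, for smooth Dirichlet data and sufficiently regular $A_0$, a bound of exactly the form claimed on a sub-cylinder; the difficulty in our setting is that $A_0$ may be singular at $\xi=0$ and the trace of $v$ on an interior parabolic boundary is not a priori smooth. The strategy is to approximate both $A_0$ and the boundary values by smooth objects, apply the Cauchy--Dirichlet estimate to the regularized problem, and pass to the limit.

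\textbf{Step 1 (Approximation).} Mollify $v$ in space--time to produce $v_\eta\in C^\infty$ on a neighbourhood of $\overline{Q_{2r}}\subset Q_{3r}$, with $\phi(|Dv_\eta|)\in L^1$ bounded uniformly in $\eta$. Regularize $A_0$ to a $C^1$ non-degenerate operator $A_0^\epsilon$, for instance via a shifted growth function $g_\epsilon(s):=g(s+\epsilon)-g(\epsilon)$, chosen so that \eqref{phi-condition}--\eqref{A-condition2} hold with the same exponents $p,q$ and with structure constants uniform as $\epsilon\to 0^+$. By standard monotone-operator theory one then solves uniquely the Cauchy--Dirichlet problem
\[
\partial_t v_{\epsilon,\eta}-\mathrm{div}\, A_0^\epsilon(Dv_{\epsilon,\eta})=0\ \ \text{in}\ \ Q_{2r},\qquad v_{\epsilon,\eta}=v_\eta\ \ \text{on the parabolic boundary of}\ Q_{2r}.
\]

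\textbf{Step 2 (Uniform Lipschitz bound and limit).} Since the approximated data and operator satisfy the hypotheses of \cite[Theorem~1.2]{BaLin17}, we obtain
\[
\|Dv_{\epsilon,\eta}\|_{L^\infty(Q_{r})}\leq c\left(\fint_{Q_{2r}}[\phi(|Dv_{\epsilon,\eta}|)+1]\,dz\right)^{\max\{\frac{1}{2},\frac{2}{p(n+2)-2n}\}}
\]
with $c=c(n,\nu,\Lambda,p,q,g(1))$ independent of $\epsilon,\eta$. Standard energy estimates for the regularized problem (testing with $v_{\epsilon,\eta}-v_\eta$ and using the monotonicity \eqref{relation_A_V}) give $\fint_{Q_{2r}}\phi(|Dv_{\epsilon,\eta}|)\,dz\lesssim \fint_{Q_{2r}}\phi(|Dv_\eta|)\,dz+1$, and the latter converges to $\fint_{Q_{2r}}\phi(|Dv|)\,dz+1$ as $\eta\to 0$. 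As $\eta,\epsilon\to 0^+$, weak convergence in the relevant Orlicz--Sobolev space together with Minty's trick via \eqref{relation_A_V} identifies the limit of $v_{\epsilon,\eta}$ with $v$ itself, and the uniform $L^\infty$ bound on $Dv_{\epsilon,\eta}$ transfers to $Dv$ by weak-$*$ lower semicontinuity, which yields the claimed estimate on $Q_r$.

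\textbf{Main obstacle.} The crux is producing the regularized operator $A_0^\epsilon$ so that its growth and ellipticity constants are \emph{uniform in $\epsilon$}; this is delicate when $1<p<2$ because $g'$ is singular at zero, and a careless smoothing would make the constant $c$ coming from \cite{BaLin17} blow up. A secondary technical point is justifying $A_0^\epsilon(Dv_{\epsilon,\eta})\rightharpoonup A_0(Dv)$, which is the standard Minty/monotonicity argument but must be executed in the Orlicz framework using \eqref{relation_V_1}--\eqref{relation_A_V} rather than in the power-function setting.
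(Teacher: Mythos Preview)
Your overall plan---mollify the boundary data, apply \cite[Theorem~1.2]{BaLin17} to an auxiliary Cauchy--Dirichlet problem on $Q_{2r}$, and pass to the limit---is the paper's as well, but you add an unnecessary layer and miss a simpler convergence argument.

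First, the regularization of $A_0$ is not needed: \cite[Theorem~1.2]{BaLin17} already handles the original (possibly singular/degenerate) operator $A_0$ under \eqref{phi-condition}--\eqref{A-condition2}; only the \emph{boundary datum} must be smooth. The paper therefore sets $v_\epsilon:=v*\eta_\epsilon$, solves $\partial_t h_\epsilon-\mathrm{div}\,A_0(Dh_\epsilon)=0$ in $Q_{2r}$ with $h_\epsilon=v_\epsilon$ on $\partial_{\mathrm p}Q_{2r}$, and invokes \cite{BaLin17} directly. What you flagged as the ``main obstacle'' (uniform constants for $A_0^\epsilon$) thus disappears entirely.

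Second, in place of Minty's trick the paper obtains \emph{strong} convergence $Dh_\epsilon\to Dv$ in $L^\phi(Q_{2r})$ by observing that the mollified function is itself a weak solution: $(v_\epsilon)_t-\mathrm{div}\,[A_0(Dv)]_\epsilon=0$. Subtracting the equations, testing with $v_\epsilon-h_\epsilon$, and using \eqref{relation_A_V} and \eqref{Young_2} gives
\[
\int_{Q_{2r}}|V(Dv_\epsilon)-V(Dh_\epsilon)|^2\,dz\ \lesssim\ \int_{Q_{2r}}\phi^*\big(|A_0(Dv_\epsilon)-[A_0(Dv)]_\epsilon|\big)\,dz+\kappa\!\int_{Q_{2r}}\big[\phi(|Dv_\epsilon|)+\phi(|Dh_\epsilon|)\big]\,dz,
\]
the first right-hand term tending to $0$ by dominated convergence in $L^{\phi^*}$ (since $A_0(Dv_\epsilon)\to A_0(Dv)$ a.e.\ and $[A_0(Dv)]_\epsilon\to A_0(Dv)$ in $L^{\phi^*}$), and the energy bound $\int\phi(|Dh_\epsilon|)\lesssim\int\phi(|Dv_\epsilon|)$ handling the second. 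This avoids both the Minty machinery for varying operators and the separate uniqueness argument you would need to identify the weak limit with $v$. Your route is in principle workable, but the paper's is shorter and sidesteps exactly the two points you labelled as delicate.
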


\begin{proof}
Let  $0<\epsilon < \min\{r^2,r\}$, $\eta_\epsilon$ be a standard mollifier, and $v_\epsilon= v * \eta_\epsilon$. Then we know that, up to a subsequence,
\[
\left\{
\begin{array}{cccl}
Dv_\epsilon  & \longrightarrow & Dv & \text{a.e. in }\ Q_{2r}\\
\phi (|Dv_\epsilon|)  & \longrightarrow & \phi(|Dv|) & \text{in }\ L^1(Q_{2r})\\ 
\end{array}
\right.
\quad \text{as}\quad
\epsilon \ \to\ 0.
\]
Hence, we have the following:
\[
\int_{Q_{2r}}\phi (|Dv_\epsilon|)\,dx  \le 2 \int_{Q_{2r}}\phi (|Dv|)\,dx \quad \text{for ant sufficiently small }\epsilon>0. 
\]
We next consider the weak solution $h_\epsilon$ to
\[
\left\{\begin{array}{l}
(h_\epsilon)_t -\mathrm{div}\, A_0(Dh_\epsilon) =0 \quad \text{in }\ Q_{2r} ,\\
h_\epsilon=v_\epsilon \quad \text{on }\ \partial_\mathrm{p} Q_{2r}.
\end{array}\right.
\]
Then, by \cite[Theorem 1.2]{BaLin17}, we have the following: 
\[
\|Dh_\epsilon\|_{L^\infty(Q_r)} \leq c \left(\fint_{Q_{2r}} [\phi(|Dh_\epsilon|)+1] \, dz\right)^{\max\{\frac{1}{2},\frac{2}{p(n+2)-2n}\}},
\]
where the constant $c>0$ depends only on $n,\nu,\Lambda,p,q, g(1)$ and is independent of $\epsilon$ and the boundary datum $v_\epsilon$.
Next, we prove that $Dh_\epsilon\to Dv$ in $L^\phi(Q_{2r},R^n)$ as $\epsilon\to 0$, up to a subsequence. Then the proof is completed.

First, $v_\epsilon$ becomes a weak solution to the following parabolic problem:
\[
(v_\epsilon)_t -\mathrm{div}\, ([A_0(Dv)]_\epsilon) =0 \quad \text{in }\ Q_{2r},
\]
where $[A_0(Dv)]_\epsilon = A_0(Dv) * \eta_\epsilon$. Then, from the previous two equations,
\[
 \int_{Q_{2r}} (v_\epsilon-h_\epsilon)_t (v_\epsilon-h_\epsilon)  \,dz + \int_{Q_{2r}} ([A_0(Dv)]_\epsilon-A_0(Dh_\epsilon))\cdot (Dv_\epsilon-Dh_\epsilon)\,dz=0
\]
(cf. Remark~\ref{rmk:textfunction}). The first integral is 
\[
\int_{Q_{2r}} (v_\epsilon-h_\epsilon)_t (v_\epsilon-h_\epsilon)  \,dz=\int_{Q_{2r}} \frac{1}{2}\frac{d}{dt}(v_\epsilon-h_\epsilon)^2  \,dz =\frac{1}{2}\int_{B_{2r}} (v_\epsilon-h_\epsilon)^2|_{t=t_0+(2r)^2}  \,dx \ge 0.
\]
Hence, from \eqref{relation_A_V} and \eqref{Young_2}, 
\begin{equation}\label{pf1}\begin{aligned}
\int_{Q_{2r}} & |V(Dv_\epsilon)-V(Dh_\epsilon)|^2\,dz 
\le c \int_{Q_{2r}} (A_0(Dv_\epsilon)-A_0(Dh_\epsilon))\cdot (Dv_\epsilon-Dh_\epsilon)\,dz \\
&\le c \int_{Q_{2r}} (A_0(Dv_\epsilon) -[A_0(Dv)]_\epsilon)\cdot (Dv_\epsilon-Dh_\epsilon)\,dz\\
&\le c(\kappa_1) \int_{Q_{2r}} \phi^*(|A_0(Dv_\epsilon) -[A_0(Dv)]_\epsilon|)\,dz + \kappa_1\int_{Q_{2r}}\phi (|Dv_\epsilon-Dh_\epsilon|)\,dz\\
&\le c(\kappa_1) \int_{Q_{2r}} \phi^*(|A_0(Dv_\epsilon) - A_0(Dv)|)\,dz + c(\kappa_1) \int_{Q_{2r}} \phi^*(|[A_0(Dv)]_\epsilon-A_0(Dv)|)\,dz\\
&\qquad +  \kappa_1\int_{Q_{2r}}[\phi (|Dv_\epsilon|)+\phi(|Dh_\epsilon|)]\,dz,
\end{aligned}\end{equation}
where $\kappa_1\in(0,1)$. From the preceding estimate, using  \eqref{relation_V_3}, \eqref{A-condition2} and Proposition~\ref{prop0} (4), we first observe that
\[
\int_{Q_{2r}} \phi(|Dh_\epsilon|)\,dz \le c \int_{Q_{2r}} \phi(|Dv_\epsilon|)\,dz \le c \int_{Q_{2r}} \phi(|Dv|)\,dz 
\]
for any sufficiently small $\epsilon>0$. We next prove that the first integral on the right hand side in \eqref{pf1} approaches $0$ as $\epsilon\to0$, up to a subsequence.  Because $\phi^*(|A_0(Dv_\epsilon)|) \le c \phi(Dv_\epsilon)$ and  $\phi(Dv_\epsilon)\to \phi(Dv)$ in $L^1(Q_{2r})$, by a modified Lebesgue dominant convergence theorem,  $\phi^*(|A_0(Dv_\epsilon)|) \to \phi^*(|A_0(Dv)|)$ in $L^1(Q_{2r})$. Moreover,   
$A_0(Dv_\epsilon)\to A_0(Dv)$ a.e. in $Q_{2r}$; thus,  $A_0(Dv_\epsilon) \to A_0(Dv)$ in $L^{\phi^*}(Q_{2r},\R^n)$, up to a subsequence. Therefore, because $[A_0(Dv)]_\epsilon \to A_0(Dv)$ in $L^{\phi^*}(Q_{2r},\R^n)$ and $\kappa_1\in(0,1)$ is arbitrary, 
\[
\lim_{\epsilon\to0} \int_{Q_{2r}}  |V(Dv_\epsilon)-V(Dh_\epsilon)|^2\,dz =0.
\]
Finally, applying \eqref{relation_V_2}, we prove the following:
$$
\lim_{\epsilon\to0} \int_{Q_{2r}}  \phi(|Dh_\epsilon-Dv|)\,dz \le \lim_{\epsilon\to0} \int_{Q_{2r}}  \phi(|Dh_\epsilon-Dv_\epsilon|)\,dz+\lim_{\epsilon\to0} \int_{Q_{2r}}  \phi(|Dv_\epsilon-Dv|)\,dz =0.
$$
\end{proof}

\section{Comparison}
\label{sec3}
  
In this section, we derive comparison estimates. Assuming $Q_{4} = Q_{4}(0) \Subset \Omega_I$, we consider the following homogeneous problem:
\begin{equation}\label{homo_prob}
\left\{ \begin{array}{rclcl}
\partial_t w-\mathrm{div}\, A(z,Dw) & = & 0 & \ \text{in} & Q_{4}, \\
w & = & u & \ \text{on} & \partial_p Q_{4},
\end{array} \right.
\end{equation}
where $\partial_p Q_{4}$ is the parabolic boundary of $Q_4$ and $u$ solves Equation \eqref{maineq}.
Then, we obtain the following comparison estimate.

\begin{lemma}[First comparison estimate]\label{lem:comp1}
Let $w$ be the weak solution to \eqref{homo_prob}.
Then for any $\epsilon>0$, there exists a small $\delta=\delta(n,\nu,\Lambda,p,q,\epsilon)>0$  such that if
\begin{equation}\label{comp1_assumption}
\fint_{Q_{4}} \left[ \phi(|Du|) + \frac{1}{\delta} \phi(|F|) \right] dz \leq 1
\end{equation}
then
\[
\fint_{Q_{4}} \left| V(Du)-V(Dw) \right|^2 dz \leq \epsilon.
\]
\end{lemma}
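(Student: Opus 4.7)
The plan is to derive the standard comparison identity by subtracting the weak formulations of \eqref{maineq} and \eqref{homo_prob} and testing the difference with $u-w$, which vanishes on $\partial_p Q_4$ and is therefore an admissible test function. The distributional time derivative is handled rigorously by Steklov averaging in time and passing to the limit. After doing so, one arrives at
\[
\int_{Q_4}\partial_t(u-w)(u-w)\,dz+\int_{Q_4}\bigl(A(z,Du)-A(z,Dw)\bigr)\cdot(Du-Dw)\,dz=\int_{Q_4}\tfrac{g(|F|)}{|F|}F\cdot(Du-Dw)\,dz.
\]
The first integral equals $\tfrac{1}{2}\int_{B_4}|u-w|^2(t)\,dx$ evaluated at the appropriate slice and is nonnegative because $u=w$ at $t=t_0-16$. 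Dropping it and applying the monotonicity inequality \eqref{relation_A_V} on the left yields
\[
\int_{Q_4}|V(Du)-V(Dw)|^2\,dz\leq c\int_{Q_4}g(|F|)\,|Du-Dw|\,dz.
\]

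For the right-hand side I apply Young's inequality \eqref{Young_2} with a small parameter $\kappa>0$ in the form
\[
g(|F|)|Du-Dw|\leq \kappa\,\phi(|Du-Dw|)+c(\kappa)\,\phi^{*}(g(|F|)).
\]
Since $g=\phi'$ and $\phi$ satisfies \inc{p}--\dec{q}, Proposition~\ref{prop0}(4) together with \eqref{equiv2} gives $\phi^{*}(g(s))\leq s g(s)\leq q\phi(s)$. I then invoke \eqref{relation_V_2} with $\xi_1=Dw$, $\xi_2=Du$ and a fixed auxiliary parameter to write
\[
\phi(|Du-Dw|)\leq c_0\,|V(Du)-V(Dw)|^2+c_1\,\phi(|Du|),
\]
which keeps the remainder term in terms of $\phi(|Du|)$, directly controlled by the hypothesis \eqref{comp1_assumption}. (Choosing $\xi_2=Dw$ would instead force a separate energy estimate for $w$, which I avoid.)

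Inserting these two bounds and selecting $\kappa$ small enough that $c\kappa c_0\leq \tfrac{1}{2}$ permits absorption of the $|V(Du)-V(Dw)|^2$ contribution into the left-hand side. This leaves, after dividing by $|Q_4|$,
\[
\fint_{Q_4}|V(Du)-V(Dw)|^2\,dz\leq C\kappa\fint_{Q_4}\phi(|Du|)\,dz+C(\kappa)\fint_{Q_4}\phi(|F|)\,dz\leq C\kappa+C(\kappa)\delta,
\]
by \eqref{comp1_assumption}. Given any $\epsilon>0$, I first fix $\kappa$ so small that $C\kappa\leq\epsilon/2$, and then choose $\delta=\delta(n,\nu,\Lambda,p,q,\epsilon)>0$ so small that $C(\kappa)\delta\leq\epsilon/2$, finishing the proof. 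The only genuine technical subtlety is the rigorous use of $u-w$ as a test function despite the low time regularity of weak solutions, which is standard via Steklov averages; the remaining work is a two-parameter tuning of an Orlicz-adapted Young inequality together with the vector-field inequality \eqref{relation_V_2}.
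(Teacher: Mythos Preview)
Your proof is correct and follows the same overall skeleton as the paper's (test with $u-w$, drop the nonnegative time term, apply the monotonicity \eqref{relation_A_V}, then tune a small parameter and $\delta$), but you handle the right-hand side differently. The paper splits $|Du-Dw|\le |Du|+|Dw|$, obtaining via Young's inequality
\[
\fint_{Q_4}|V(Du)-V(Dw)|^2\,dz \le \tau\fint_{Q_4}\phi(|Du|)\,dz+\tau\fint_{Q_4}\phi(|Dw|)\,dz+c(\tau)\fint_{Q_4}\phi(|F|)\,dz,
\]
and then proves a separate energy estimate $\fint_{Q_4}\phi(|Dw|)\,dz\le c\fint_{Q_4}[\phi(|Du|)+\phi(|F|)]\,dz$ to close the argument. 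You instead keep $|Du-Dw|$ intact, apply Young to produce $\kappa\,\phi(|Du-Dw|)$, and invoke \eqref{relation_V_2} with $\xi_2=Du$ to convert this to $c_0|V(Du)-V(Dw)|^2+c_1\phi(|Du|)$, absorbing the first term. This is slightly more economical since it bypasses the energy estimate for $w$; the paper's route, on the other hand, yields that energy bound \eqref{comp1_pf_2} as a byproduct, which is reused later (e.g.\ in the proof of Lemma~\ref{lem:comp2}).
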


\begin{remark}\label{rmk:textfunction}
We will use test functions involving the weak solution $u$ in the weak formulation \eqref{weakform}. However, the weak solution $u$ to the parabolic equation in \eqref{maineq} may not be differentiable in the time variable $t$.
It is standard to consider Steklov averages to overcome this issue (see \cite{DiBenedetto_book} and \cite{BaLin17}). This argument is now well-known in the field of parabolic PDEs; therefore, we abuse the notation $\partial_t u$ without further explanation.
\end{remark}

\begin{proof}[Proof of Lemma \ref{lem:comp1}]
We take $\zeta=u-w$ as a test function in \eqref{maineq} and \eqref{homo_prob} using Steklov averages to obtain
\begin{equation}\label{weakform_comp1}
\int_{Q_{4}} \left( A(z,Du) - A(z,Dw) \right) \cdot (Du-Dw) \, dz \leq \int_{Q_{4}} \frac{g(|F|)}{|F|}F \cdot (Du-Dw) \, dz.
\end{equation}
Using \eqref{relation_A_V} and Young's inequality \eqref{Young_2} with Proposition~\ref{prop0}(4), we obtain the following:
\begin{equation}\label{comp1_pf_1}
\begin{split}
& \fint_{Q_{4}} \left| V(Du)-V(Dw) \right|^2 dz \\
& \quad \leq \fint_{Q_{4}} g(|F|) (|Du|+|Dw|) \, dz \\
& \quad  \leq \tau \fint_{Q_{4}} \phi(|Du|) \, dz + \tau \fint_{Q_{4}} \phi(|Dw|) \, dz + c(\tau) \fint_{Q_{4}} \phi(|F|) \, dz 
\end{split}
\end{equation}
for any $\tau \in (0,1)$.

From~\eqref{relation_V_3}, \eqref{relation_A_V}, \eqref{A-condition1}, \eqref{weakform_comp1}, Lemma~\ref{lem:relation_V} and Young's inequalities \eqref{Young_2} with Proposition~\ref{prop0} (4), 
\[\begin{split}
\fint_{Q_{4}} \phi(|Dw|) \, dz & \le c \fint_{Q_{4}} |V(Dw)|^2 \, dz \\
& \le c \fint_{Q_{4}} A(z,Dw) \cdot Dw \, dz \\
& \le c \fint_{Q_{4}} g(|Dw|)|Du| \, dz + \fint_{Q_{4}} g(|Du|)(|Du|+|Dw|) \, dz \\
& \qquad + \fint_{Q_{4}} g(|F|) (|Du|+|Dw|) \, dz \\
& \leq \frac12 \fint_{Q_{4}} \phi(|Dw|) \, dz + c \fint_{Q_{4}} \phi(|Du|) \, dz + c \fint_{Q_{4}} \phi(|F|) \, dz.
\end{split}
\]
Hence,
\begin{equation}\label{comp1_pf_2}
\fint_{Q_{4}} \phi(|Dw|) \, dz \leq  c \fint_{Q_{4}} \phi(|Du|) \, dz + c \fint_{Q_{4}} \phi(|F|) \, dz.
\end{equation}

Combining \eqref{comp1_pf_1} and \eqref{comp1_pf_2} yields
\[\begin{split}
\fint_{Q_{4}} \left| V(Du)-V(Dw) \right|^2 dz & \leq (c+1)\tau \fint_{Q_{4}} \phi(|Du|) \, dz + c(\tau) \fint_{Q_{4}} \phi(|F|) \, dz \\
& \leq (c+1)\tau + c(\tau)\delta,
\end{split}
\]
where we used the assumption in \eqref{comp1_assumption}.
By setting $\tau \in (0,1)$ and $\delta \in (0,1)$ properly, we conclude that
\[
\fint_{Q_{4}} \left| V(Du)-V(Dw) \right|^2 dz \leq \epsilon.
\]
\end{proof}

We next consider the vector field $A_0 : \R^n\to \R^n$, which is defined by
\[
A_0(\xi) := \fint_{Q_{3}} A(z,\xi) \, dz = \frac{1}{|Q_{3}|} \int_{Q_{3}} A(z,\xi) \, dz,
\]
and the following reference problem:
\begin{equation}\label{freezing_prob}
\left\{ \begin{array}{rclcl}
\partial_t v-\mathrm{div}\, A_0(Dv) & = & 0 & \ \text{in} & Q_{3}, \\
v & = & w & \ \text{on} & \partial_p Q_{3},
\end{array} \right.
\end{equation}
where $w$ solves the problem in \eqref{homo_prob} and $Q_3=Q_3(0)$. Additionally, $A_0$ satisfies  \eqref{A-condition1} and \eqref{A-condition2}, replacing $A(z,\xi)$ with $A_0(\xi)$.

\begin{lemma}[Second comparison estimate]\label{lem:comp2}
Let $v$ be the weak solution to \eqref{freezing_prob}.
Then for any $\epsilon>0$, there exists a small $\delta=\delta(n,\nu,\Lambda,p,q,\epsilon)>0$ such that if \eqref{comp1_assumption} holds and $A$ is $(\delta,3)$-vanishing, then
\[
\fint_{Q_3} \left| V(Dw)-V(Dv) \right|^2 dz \leq \epsilon
\]
and
\begin{equation}\label{comp2_result2}
\|Dv\|_{L^\infty(Q_1)} \leq N_0
\end{equation}
for some $N_0=N_0(n,\nu,\Lambda,p,q) \geq 1$, where $Q_1=Q_1(0)$.
\end{lemma}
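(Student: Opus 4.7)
The plan is to prove the two conclusions in sequence: the $V$-comparison between $w$ and $v$ via an energy identity, then the Lipschitz bound as a direct application of Theorem~\ref{thmLip}. For the comparison, I would subtract the weak formulations of \eqref{homo_prob} and \eqref{freezing_prob} and insert the admissible test function $\zeta=w-v$ (which vanishes on $\partial_p Q_3$), justified via Steklov averages as in Remark~\ref{rmk:textfunction}. The time-derivative term reduces to $\tfrac12\int_{B_3}(w-v)^2\big|_{t=t_0+9}\,dx\ge 0$, leaving
\[
\int_{Q_3}(A_0(Dw)-A_0(Dv))\cdot(Dw-Dv)\,dz \le \int_{Q_3}(A_0(Dw)-A(z,Dw))\cdot(Dw-Dv)\,dz.
\]
Monotonicity \eqref{relation_A_V} bounds the left side from below by $c\int_{Q_3}|V(Dw)-V(Dv)|^2\,dz$, while by the definition of $\theta(A;Q_3)$ the right integrand is at most $\theta(A;Q_3)(z)\,g(|Dw|)|Dw-Dv|$. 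I split this by Young's inequality \eqref{Young_2} into $\tau\phi(|Dw-Dv|)+c(\tau)\phi^*(\theta g(|Dw|))$; the first piece is bounded via \eqref{relation_V_2} by $c\kappa|V(Dw)-V(Dv)|^2+c(\kappa)\phi(|Dv|)$, and the second by $c\,\theta^{q'}\phi(|Dw|)$, using $\theta\le 2\Lambda$, the \ainc{q'} property of $\phi^*$ (conjugate to \adec{q} for $\phi$), and Proposition~\ref{prop0}(4) together with $s\phi'(s)\approx\phi(s)$.

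The BMO defect is then handled by H\"older's inequality combined with the higher-integrability exponent $\sigma_0>0$ obtained by applying Theorem~\ref{thmhigh} to $w$, and the $(\delta,3)$-vanishing assumption through \eqref{property_vanishing}:
\[
\fint_{Q_3}\theta^{q'}\phi(|Dw|)\,dz \le \Big(\fint_{Q_3}\theta^{q'(1+\sigma_0)/\sigma_0}\,dz\Big)^{\frac{\sigma_0}{1+\sigma_0}}\Big(\fint_{Q_3}\phi(|Dw|)^{1+\sigma_0}\,dz\Big)^{\frac{1}{1+\sigma_0}} \lesssim \delta^{\frac{\sigma_0}{1+\sigma_0}},
\]
where $\fint_{Q_3}\phi(|Dw|)^{1+\sigma_0}$ is controlled because the energy estimate underlying \eqref{comp1_pf_2} together with \eqref{comp1_assumption} gives $\fint_{Q_4}\phi(|Dw|)\le c$. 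The auxiliary term $\fint_{Q_3}\phi(|Dv|)$ is bounded analogously via the homogeneous problem \eqref{freezing_prob} with boundary datum $w$, yielding $\fint_{Q_3}\phi(|Dv|)\le c\fint_{Q_3}\phi(|Dw|)\le c$. Choosing $\tau,\kappa$ small to absorb the $|V(Dw)-V(Dv)|^2$ contributions into the left side, then $\delta$ small, yields the bound $\fint_{Q_3}|V(Dw)-V(Dv)|^2\,dz\le\epsilon$.

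For the Lipschitz estimate, $A_0$ inherits the structural conditions \eqref{A-condition1}--\eqref{A-condition2} by averaging over $Q_3$, and $v$ solves $v_t-\div A_0(Dv)=0$ on $Q_3$; applying Theorem~\ref{thmLip} with $r=1$ gives
\[
\|Dv\|_{L^\infty(Q_1)} \le c\Big(\fint_{Q_2}[\phi(|Dv|)+1]\,dz\Big)^{\max\{1/2,\,2/(p(n+2)-2n)\}} \le N_0,
\]
since $\fint_{Q_2}\phi(|Dv|)\le c\fint_{Q_3}\phi(|Dv|)\le c$ from the uniform bound just established. I expect the main technical obstacle to be the careful orchestration of the Orlicz conjugate $\phi^*$ with the BMO exponent $q'$ and the higher-integrability gain $\sigma_0$: one must verify that the resulting exponent of $\delta$ is genuinely positive and that all constants stay uniform in the remaining structural parameters independently of the boundary data $u$.
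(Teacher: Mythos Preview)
Your proposal is correct and follows essentially the same route as the paper's proof: test with $w-v$, use monotonicity \eqref{relation_A_V}, control the defect $\theta(A;Q_3)\,g(|Dw|)\,|Dw-Dv|$ via Young's inequality combined with the higher integrability of $Dw$ (Theorem~\ref{thmhigh}) and the $(\delta,3)$-vanishing condition, then obtain the Lipschitz bound from Theorem~\ref{thmLip} and the energy estimate $\fint_{Q_3}\phi(|Dv|)\le c$. The only cosmetic difference is that the paper first splits $|Dw-Dv|\le|Dw|+|Dv|$ and treats the two pieces separately, whereas you apply Young directly to $|Dw-Dv|$ and then invoke \eqref{relation_V_2}; both lead to the same absorption scheme and the same dependence of $\delta$ on~$\epsilon$.
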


\begin{proof}
We test \eqref{homo_prob} and \eqref{freezing_prob} with $\zeta=w-v$ using Steklov averages (cf. Remark~\ref{rmk:textfunction}) to observe that
\[
\int_{Q_{3}} \left( A(z,Dw) - A_0(Dv) \right) \cdot (Dw-Dv) \, dz \leq 0.
\]
It follows that
\[
\begin{split}
& \int_{Q_{3}} \left( A_0(Dw) - A_0(Dv) \right) \cdot (Dw-Dv) \, dz \\
& \qquad \leq -\int_{Q_{3}} \left( A(z,Dw) - A_0(Dw) \right) \cdot (Dw-Dv) \, dz.
\end{split}
\]
As $A_0(\cdot)$ is the mean of $A(z,\cdot)$ over $Q_{3}$ with respect to the $z$ variable,  \eqref{relation_A_V} holds, where $A(z,\cdot)$ is replaced with $A_0(\cdot)$.
Using this finding and Definition~\ref{def:vanishing}, we obtain the following:
\begin{equation}\label{comp2_pf_1}
\begin{split}
\fint_{Q_{3}} \left| V(Dw)-V(Dv) \right|^2 dz & \leq \fint_{Q_{3}} \left| A(z,Dw) - A_0(Dw) \right| |Dw-Dv| \, dz \\
& \leq \fint_{Q_{3}} \theta(A; Q_{3})(z) \, g(|Dw|) \, |Dw-Dv| \, dz \\
& \leq \fint_{Q_{3}} \theta(A; Q_{3})(z) \, g(|Dw|) \, |Dw| \, dz \\
& \qquad + \fint_{Q_{3}} \theta(A; Q_{3})(z) \, g(|Dw|) \, |Dv| \, dz \\
& =: \RN{1} + \RN{2}.
\end{split}
\end{equation}
To estimate $\RN{1}$, we use Proposition~\ref{prop0}, H\"{o}lder's inequality and the higher integrability (Theorem~\ref{thmhigh}) for $Dw$:
\[
\begin{split}
\RN{1} & = \fint_{Q_{3}} \theta(A; Q_{3})(z) \, g(|Dw|) \, |Dw| \, dz \\
& \le c \fint_{Q_{3}} \theta(A; Q_{3})(z) \, \phi(|Dw|) \, dz \\
& \le c \left( \fint_{Q_{3}} \theta^{\frac{1+\sigma}{\sigma}} \, dz \right)^{\frac{\sigma}{1+\sigma}} \left( \fint_{Q_{3}} \phi(|Dw|)^{1+\sigma} \, dz \right)^{\frac{1}{1+\sigma}} \\
& \le c \left( \fint_{Q_{3}} \theta^{\frac{1+\sigma}{\sigma}} \, dz \right)^{\frac{\sigma}{1+\sigma}} \left[(\phi \circ \mathcal{D}^{-1})\bigg(\fint_{Q_{4}}\phi(|Dw|)\,dz\bigg)\right]^{\frac{\sigma}{1+\sigma}} \left( \fint_{Q_{4}}\phi(|Dw|)\,dz \right)^{\frac{1}{1+\sigma}}.
\end{split}
\]
It follows from the $(\delta,2)$-vanishing condition of $A$ with \eqref{property_vanishing}, \eqref{comp1_pf_2} and \eqref{comp1_assumption} that
\begin{equation}\label{comp2_pf_2}
\RN{1} \le c \delta^{\frac{\sigma}{1+\sigma}} \left[(\phi \circ \mathcal{D}^{-1})(c)\right]^{\frac{\sigma}{1+\sigma}} c^{\frac{1}{1+\sigma}} \le c \delta^{\frac{\sigma}{1+\sigma}}.
\end{equation}

We also use Young's inequality, properties of $\phi$ and $\phi^*$, H\"{o}lder's inequality and the higher integrability for $Dw$ to estimate $\RN{2}$ as follows:
\[
\begin{split}
\RN{2} & = \fint_{Q_{3}} \theta(A; Q_{3})(z) \, g(|Dw|) \, |Dv| \, dz \\
& \leq c(\tau) \fint_{Q_{3}} \phi^* \left( \theta \, g(|Dw|) \right) dz + \tau \fint_{Q_{3}} \phi(|Dv|) \, dz \\
& \leq c(\tau) \fint_{Q_{3}} \max \left\{ \theta^{\frac{p}{p-1}}, \theta^{\frac{q}{q-1}} \right\} \, \phi^* \left( g(|Dw|) \right) dz + \tau \fint_{Q_{3}} \phi(|Dv|) \, dz \\
& \leq c(\tau) \fint_{Q_{3}} \left( \theta^{\frac{p}{p-1}} + \theta^{\frac{q}{q-1}} \right) \phi(|Dw|) \, dz + \tau \fint_{Q_{3}} \phi(|Dv|) \, dz \\
& \le c(\tau) \left( \fint_{Q_{3}} \theta^{\frac{p}{p-1} \cdot \frac{1+\sigma}{\sigma}} \, dz \right)^{\frac{\sigma}{1+\sigma}} \left( \fint_{Q_{3}} \phi(|Dw|)^{1+\sigma} \, dz \right)^{\frac{1}{1+\sigma}} \\
& \quad + c(\tau) \left( \fint_{Q_{3}} \theta^{\frac{q}{q-1} \cdot \frac{1+\sigma}{\sigma}} \, dz \right)^{\frac{\sigma}{1+\sigma}} \left( \fint_{Q_{3}} \phi(|Dw|)^{1+\sigma} \, dz \right)^{\frac{1}{1+\sigma}} + \tau \fint_{Q_{3}} \phi(|Dv|) \, dz
\end{split}
\]
for any $\tau \in (0,1)$.
By similar argument, we obtain 
\[
\RN{2} \le c(\tau) \, \delta^{\frac{\sigma}{1+\sigma}} + \tau \fint_{Q_{3}} \phi(|Dv|) \, dz.
\]
As in the proof of Lemma~\ref{lem:comp1}, we deduce that
\begin{equation}\label{comp2_Dv}
\fint_{Q_{3}} \phi(|Dv|) \, dz \leq c \fint_{Q_{3}} \phi(|Dw|) \, dz \leq c.
\end{equation}
Therefore, we observe that
\begin{equation}\label{comp2_pf_3}
\RN{2} \le c(\tau) \, \delta^{\frac{\sigma}{1+\sigma}} + \tau
\end{equation}
for any $\tau \in (0,1)$.

Combining \eqref{comp2_pf_1} with \eqref{comp2_pf_2} and \eqref{comp2_pf_3} results in the following:
\[
\fint_{Q_3} \left| V(Dw)-V(Dv) \right|^2 dz \le c(\tau) \, \delta^{\frac{\sigma}{1+\sigma}} + \tau.
\]
By setting $\tau \in (0,1)$ and $\delta \in (0,1)$ properly, we conclude that
\[
\fint_{Q_3} \left| V(Dw)-V(Dv) \right|^2 dz \leq \epsilon.
\]
Furthermore, the estimate in \eqref{comp2_result2} follows directly from Theorem~\ref{thmLip} and \eqref{comp2_Dv}.
\end{proof}

\section{Proof of Theorem~\ref{mainthm}}\label{sec4}

We prove the main theorem starting with the following technical lemma.

\begin{lemma}[\cite{Giusti_book}]\label{lem:technical}
Let $\Upsilon : [R_1,R_2] \rightarrow [0,\infty)$ be a bounded function. Suppose that for any $s_1$ and $s_2$, where $0 < R_1 \leq s_1 < s_2 \leq R_2$,
\[
\Upsilon(s_1) \leq \vartheta \Upsilon(s_2) + \frac{P}{(s_2-s_1)^{\kappa}} + Q
\]
with $P,Q \geq 0$, $\kappa>0$ and $\vartheta \in [0,1)$.
Then there exists $c=(\vartheta,\kappa)>0$ such that
\[
\Upsilon(R_1) \leq c(\vartheta,\kappa) \left[ \frac{P}{(R_2-R_1)^{\kappa}} + Q \right].
\]
\end{lemma}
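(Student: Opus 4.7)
The plan is to iterate the hypothesized inequality along a geometric sequence of radii that increases to $R_2$, using $\vartheta \in [0,1)$ to drive the iterated prefactor $\vartheta^k$ to zero (which is the one place boundedness of $\Upsilon$ enters). Accordingly, I would first fix a parameter $\tau \in (0,1)$ satisfying $\vartheta\, \tau^{-\kappa} < 1$, i.e., $\tau^\kappa > \vartheta$; such a $\tau$ exists precisely because $\vartheta<1$. Then I would set
\[
r_0 := R_1, \qquad r_{i+1} := r_i + (1-\tau)\tau^i (R_2 - R_1), \qquad i = 0,1,2,\ldots,
\]
so that $r_i = R_1 + (1-\tau^i)(R_2-R_1) \nearrow R_2$ and $r_{i+1}-r_i = (1-\tau)\tau^i (R_2-R_1)$. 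Each pair $s_1 = r_i$, $s_2 = r_{i+1}$ lies in $[R_1, R_2]$, so the hypothesis applies and gives
\[
\Upsilon(r_i) \leq \vartheta\,\Upsilon(r_{i+1}) + \frac{P}{(1-\tau)^{\kappa}\,\tau^{i\kappa}\,(R_2-R_1)^{\kappa}} + Q.
\]

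Next I would iterate this bound $k$ times, feeding the estimate for $\Upsilon(r_{i+1})$ back in at each step, to obtain
\[
\Upsilon(R_1) \leq \vartheta^k\,\Upsilon(r_k) + \frac{P}{(1-\tau)^{\kappa}\,(R_2-R_1)^{\kappa}} \sum_{i=0}^{k-1}\bigl(\vartheta\,\tau^{-\kappa}\bigr)^i + Q \sum_{i=0}^{k-1}\vartheta^i.
\]
The choice of $\tau$ makes the first geometric series convergent, and $\vartheta<1$ makes the second convergent. Since $\Upsilon$ is bounded on $[R_1,R_2]$ and $\vartheta^k \to 0$, letting $k \to \infty$ kills the remainder term and yields
\[
\Upsilon(R_1) \leq \frac{1}{(1-\tau)^{\kappa}(1-\vartheta\tau^{-\kappa})}\cdot\frac{P}{(R_2-R_1)^{\kappa}} + \frac{Q}{1-\vartheta}.
\]
Taking $c(\vartheta,\kappa)$ to be the maximum of the two explicit prefactors (with, say, the concrete choice $\tau^{\kappa} = (1+\vartheta)/2$) delivers the conclusion.

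There is no genuine obstacle: the entire argument is a telescoping-plus-geometric-series computation. The only point requiring any care is picking $\tau$ so that \emph{both} series converge simultaneously, which forces the two-sided condition $\vartheta < \tau^{\kappa} < 1$; and verifying that boundedness of $\Upsilon$ is exactly what is needed to discard the residual term $\vartheta^k\,\Upsilon(r_k)$ in the limit.
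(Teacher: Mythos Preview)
Your proof is correct and is exactly the standard iteration argument; the paper does not give its own proof but simply cites Giusti's book, where this same geometric-sequence-of-radii iteration is carried out.
</p>
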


\begin{proof}[\bf Proof of Theorem~\ref{mainthm}]   First, there exists $\tilde \psi\in \Phi_{\mathrm c}\cap C^1([0,\infty))$ satisfying \inc{p_1} and \adec{q_1} such that $\psi\approx \tilde \psi$ (see \cite[Remark 2.6]{HasOkparahigh}). 
Therefore, we assume that  $\psi\in \Phi_{\mathrm c}\cap C^1([0,\infty))$ without loss of generality. Moreover, we also assume that  $g(1)=\psi(1)=1$. 

We let $Q_{2R}=Q_{2R}(z_0)\Subset \Omega_I$ with $R\le R_0$ be fixed, where $R_0>0$ derives from the $(\delta,R_0)$-vanishing condition of $A$. For simplicity, we write $Q_\rho=Q_\rho(z_0)$, $\rho\in(0,2R]$. In addition, for $\rho>0$ and $\lambda>0$, we define
\[
E(\rho,\lambda):=\{z\in Q_{\rho}:|Du(z)|>\lambda\}.
\]
Also, we define the following:
\begin{equation}\label{lambda0}
\lambda_0 := \mathcal{D}^{-1} \left( \fint_{Q_{2R}} \left[ \phi(|Du|) + \frac{1}{\delta} \phi(|F|)\right] dz \right),
\end{equation}
where $\delta \in (0,1)$, depending only on $n$, $\nu$, $\Lambda$, $p$, $q$, $p_1$, $q_1$, and  $L$, is determined later (see below from \eqref{epsilonchoice}), and $\mathcal D$ is defined by  \eqref{D-def}. From \inc{p} of $\phi$ with $p>\frac{2n}{n+2}$,
\begin{equation} \label{Dineq2}
C^{\min\left\{2,\frac{p(n+2)-2n}{2}\right\}} \mathcal{D}(s)
=
\min\Big\{C^2, C^{\frac{p(n+2)-2n}{2}}\Big\} \mathcal{D}(s)
\le 
\mathcal{D}(Cs) 
\end{equation}
for all $s>0$ and $C\ge 1$.

We let $R \leq r_1 < r_2 \leq 2R$ and consider any $\lambda$ satisfying the following:
\begin{equation}\label{lambda1}
\lambda \geq \lambda_1:= \left(\frac{64R}{r_2-r_1}\right)^
{(n+2)\max\left\{\frac{1}{2}, \frac2{p(n+2)-2n}\right\} }\lambda_0.
\end{equation}
With this $\lambda$, we also define
\begin{equation}\label{rlambda}
\rho_\lambda:= 
\min\{1,\phi_2(\lambda)^{\frac12}\}(r_2-r_1) \le R.
\end{equation}
We notice that  $Q^{\lambda}_{\rho}(z) \subset Q_{r_2}\subset Q_{2R}$ 
for any $z \in E(r_1,\lambda)$ and $\rho \leq \rho_\lambda$, where $Q^{\lambda}_{\rho}(z)$ is an intrinsic parabolic cylinder defined in Subsection~\ref{subsec2.1}. Then, we prove a Vitali type covering of the super-level set $E(r_1,\lambda)$ satisfying a balancing condition on each set.

\begin{lemma}\label{lem:covering}
For each $R \leq r_1<r_2\leq 2R$ and $\lambda\geq \lambda_1$, there exist $w_i\in E(r_1,\lambda)$ and $\rho_i\in \left(0,\frac{\rho_\lambda}{32} \right)$, $i=1,2,3,\cdots$, such that $Q^\lambda_{\rho_i}(w_i)$ are mutually disjoint,
\[
E(r_1,\lambda)\setminus N\ \subset\ \bigcup_{i=1}^\infty Q^\lambda_{8\rho_i}(w_i) 
\]
for some Lebesgue measure zero set $N$,
\[
\fint_{Q^\lambda_{\rho_i}(w_i)} \left[ \phi(|Du|) + \frac{1}{\delta} \phi(|F|) \right] dz=\phi(\lambda),
\]
and
\[
\fint_{Q^\lambda_{\rho}(w_i)} \left[ \phi(|Du|) + \frac{1}{\delta} \phi(|F|) \right] dz < \phi(\lambda) \ \ \ \text{for all } \rho \in(\rho_i,\rho_\lambda].
\]
\end{lemma}

\begin{proof}
The proof is exactly the same as that in \cite[Lemma 4.6]{HasOkparahigh}. However, for reader's convenience, we provide the proof. For $w\in E(r_1,\lambda)$ and $\rho \in \left[\frac{\rho_\lambda}{32},\rho_\lambda \right)$, using \eqref{lambda0}, we obtain
\[\begin{split}
\fint_{Q^\lambda_{\rho}(w)}\left[ \phi(|Du|) + \frac{1}{\delta} \phi(|F|) \right] dz & \leq \frac{|Q_{2R}|}{|Q^\lambda_{\rho}(w)|} \fint_{Q_{2R}}\left[ \phi(|Du|) + \frac{1}{\delta} \phi(|F|)  \right]dz  \\
& \leq \frac{|Q_{2R}|}{|Q_{\rho_\lambda/32}(w)|}\phi_2(\lambda)\mathcal{D}(\lambda_0).
\end{split}\]
It follows from \eqref{rlambda}, \eqref{Dineq2}, \eqref{lambda1}, and \eqref{D-def} that
%
\[\begin{split}
\frac{|Q_{2R}|}{|Q_{\rho_\lambda/32}(w)|}\phi_2(\lambda)\mathcal{D}(\lambda_0) 
& \leq 
\bigg(\frac{64R}{r_2-r_1}\bigg)^{n+2} 
\max\left\{1,\phi_2(\lambda)^{-\frac{n+2}{2}}\right\} \phi_2(\lambda)\mathcal{D}(\lambda_0)\\
&\leq \mathcal{D}(\lambda)  \max\left\{1,\phi_2(\lambda)^{-\frac{n+2}{2}}\right\} \phi_2(\lambda) \\
&= \min\left\{1,\phi_2(\lambda)^{\frac{n+2}{2}}\right\}\lambda^2\max\left\{1,\phi_2(\lambda)^{-\frac{n+2}{2}}\right\} \phi_2(\lambda)
= \phi(\lambda).
\end{split}\]
Therefore, we obtain
\[
\fint_{Q^\lambda_{\rho}(w)}\left[ \phi(|Du|) + \frac{1}{\delta} \phi(|F|) \right]dz\leq \phi(\lambda)\quad \text{for all }
\ \rho\in \left[ \tfrac{\rho_\lambda}{32},\rho_\lambda \right). 
\]
Moreover,  from the parabolic Lebesgue differentiation theorem, we deduce that, for almost every $w\in E(r_1,\lambda)$,
\[
\lim_{\rho \to 0^+}\fint_{Q^\lambda_{\rho}(w)}\left[ \phi(|Du|) + \frac{1}{\delta} \phi(|F|) \right] dz \geq \phi(|Du(w)|)> \phi(\lambda).
\]
As the map $\displaystyle \rho \mapsto \fint_{Q^\lambda_{\rho}(w)} \left[ \phi(|Du|) + \frac{1}{\delta} \phi(|F|) \right] dz$ is continuous, there exists $\rho_w\in \left(0,\frac{\rho_\lambda}{32}\right)$ such that
\[
\fint_{Q^\lambda_{\rho_w}(w)} \left[ \phi(|Du|) + \frac{1}{\delta} \phi(|F|) \right] dz=\phi(\lambda)
\]
and
\[
\fint_{Q^\lambda_{\rho}(w)} \left[ \phi(|Du|) + \frac{1}{\delta} \phi(|F|) \right] dz < \phi(\lambda) \quad \text{for all }\ \rho \in(\rho_w,\rho_\lambda].
\]
Applying Vitali's covering lemma for $\{Q^\lambda_{\rho_w}(w)\}$, we obtain the desired conclusion.
\end{proof}

 Continuing to prove Theorem~\ref{mainthm}, we set
 $Q_i^{(j)} := Q^\lambda_{2^j \rho_i}(w_i)$, $j=1,2,3,4,5$, we observe from the previous lemma that
\begin{equation}\label{main_pf_1}
|Q_i^{(0)}| \leq \frac{2}{\phi(\lambda)} \int_{Q_i^{(0)} \cap \{ |Du| > \frac{\lambda}{4} \}} \phi(|Du|) \, dz + \frac{2}{\delta \phi(\lambda)} \int_{Q_i^{(0)} \cap \{ |F| > \frac{\delta \lambda}{4} \}} \phi(|F|) \, dz
\end{equation}
and
\begin{equation}\label{main_pf_2}
\fint_{Q_i^{(5)}} \left[ \phi(|Du|) + \frac{1}{\delta} \phi(|F|) \right] dz < \phi(\lambda).
\end{equation}
Hereafter, we set $\lambda \geq \lambda_1$ and then
consider the following rescaled functions and vector fields:
\[
g_{\lambda}(s) := \frac{g(\lambda s)}{g(\lambda)} \quad \text{for} \ \, s \geq 0,
\]
\[
A_{\lambda, i}(z,\xi) := \frac{1}{g(\lambda)} A(W_i, \lambda \xi) \quad \text{for } \  z \in Q_{4}(0) \ \text{ and }\ \xi \in \R^n,
\]
\[
u_{\lambda, i}(z) := \frac{u(W_i)}{8\rho_i \lambda} \quad \text{and} \quad F_{\lambda, i}(z) := \frac{F(W_i)}{\lambda} \quad \text{for } \  z \in Q_{4}(0),
\]
where
\[
W_i=w_i+ \left( 8\rho_i x, \frac{\lambda}{g(\lambda)}(8\rho_i)^2 t \right) \quad \text{for} \ \, z=(x,t).
\]
Then, $g_{\lambda}$ satisfies the condition in \eqref{phi-condition} with the same constants $p$ and $q$, and $A_{\lambda, i}(z,\xi)$ satisfies \eqref{A-condition1} and \eqref{A-condition2} with $\Omega_I=Q_4(0)$ and is $(\delta,2)$-vanishing where  $g$ is replaced with $g_{\lambda}$. Moreover, 
we observe that $u_{\lambda, i}$ is a weak solution to
\[
\partial_t u_{\lambda, i}-\mathrm{div}\, A_{\lambda, i}(z,Du_{\lambda, i}) = -\div \left(\frac{g_{\lambda}(|F_{\lambda, i}|)}{|F_{\lambda, i}|}F_{\lambda, i}\right)\quad \text{in }\ Q_{4}(0).
\]
By defining
\[
\phi_{\lambda}(s):=\int_{0}^s g_{\lambda}(\sigma)\,d\sigma = \frac{1}{\lambda g(\lambda)}\phi(\lambda s), \quad s\ge 0,
\]
from \eqref{main_pf_2} where $\lambda g(\lambda) \geq \phi(\lambda)$ (see Proposition~\ref{prop0}), we obtain the following: 
\[
\fint_{Q_{4}(0)} \left[ \phi_{\lambda}(|Du_{\lambda, i}|) + \frac{1}{\delta} \phi_{\lambda}(|F_{\lambda, i}|) \right] dz = \frac{1}{\lambda g(\lambda)} \fint_{Q_i^{(5)}} \left[ \phi(|Du|) + \frac{1}{\delta} \phi(|F|) \right] dz < 1.
\]

Here, we let $\epsilon>0$ be sufficiently small, which is  determined in \eqref{epsilonchoice} below.  Then applying Lemmas~\ref{lem:comp1} and \ref{lem:comp2}, we can assert that there exist $\delta=\delta(\epsilon)>0$, $N_0 \geq 1$, and a function $\widetilde{v}_{\lambda, i}$ such that
\[
\fint_{Q_2(0)} \left| V_{\lambda}(Du_{\lambda, i})-V_{\lambda}(D\widetilde{v}_{\lambda, i}) \right|^2 dz \leq \epsilon \quad \text{and} \quad \|D\widetilde{v}_{\lambda, i}\|_{L^\infty(Q_1(0))} \leq N_0,
\]
where $V_{\lambda}(\xi)=\frac{1}{\sqrt{\lambda g(\lambda)}} \, V(\lambda \xi)$ for $\xi \in \R^n$.
Here we remark that both $\delta$ and $N_0$ are independent of $\lambda$ and $i$.
Finally we set
$v_{\lambda, i}(z)=v_{\lambda, i}(x,t):=8\rho_i \lambda \, \widetilde{v}_{\lambda, i} \left( \frac{x-y_i}{8\rho_i}, \frac{g(\lambda)(t-\tau_i)}{\lambda(8\rho_i)^2} \right)$, where $w_i=(y_i,\tau_i)$. Then we obtain
\begin{equation}\label{main_pf_3}
\fint_{Q_i^{(3)}} \left| V(Du)-V(Dv_{\lambda, i}) \right|^2 dz \leq \lambda g(\lambda) \epsilon, \quad \text{and} \quad \|Dv_{\lambda, i}\|_{L^\infty(Q_i^{(3)})} \leq N_0 \lambda.
\end{equation}

Next, for any $\lambda>\lambda_1$, we consider the upper-level sets $E(r_1, c_0 N_0 \lambda)$ with $c_0:= 2^{\frac{q}{p}}(c_*+1)^{\frac{1}{p}} \geq 1$, where $c_*>0$ is the hidden constant in \eqref{relation_V_2}.
By Lemma~\ref{lem:covering}, the collection $\{ Q_i^{(3)} \}$ covers $E(r_1,\lambda)\setminus N \supset E(r_1,c_0 N_0 \lambda)\setminus N$ with $|N|=0$,
which yields
\[
\int_{E(r_1,c_0 N_0 \lambda)} \phi(|Du|) \, dz \leq \sum_{i=1}^{\infty} \int_{Q_i^{(3)} \cap \{ |Du|>c_0 N_0 \lambda\}} \phi(|Du|) \, dz.
\]
Using the convexity and \dec{q} of $\phi$, and \eqref{relation_V_2}, we obtain the following:
\[
\begin{split}
\phi(|Du|) & \leq \frac{1}{2} \phi(2|Du-Dv_{\lambda, i}|) + \frac{1}{2} \phi(2|Dv_{\lambda, i}|) \\
& \leq 2^{q-1} \phi(|Du-Dv_{\lambda, i}|) + 2^{q-1} \phi(|Dv_{\lambda, i}|) \\
& \leq 2^{q-1} c_* |V(Du)-V(Dv_{\lambda, i})|^2 + 2^{q-1} (c_*+1) \phi(|Dv_{\lambda, i}|).
\end{split}
\]
It follows from~\eqref{main_pf_3} that, on the set $Q_i^{(3)} \cap \{ |Du|>c_0 N_0 \lambda\}$,
\[
\begin{split}
\phi(|Du|) & \leq 2^{q-1} c_* |V(Du)-V(Dv_{\lambda, i})|^2 + 2^{q-1} (c_*+1) \phi(N_0 \lambda) \\
& \leq 2^{q-1} c_* |V(Du)-V(Dv_{\lambda, i})|^2 + \frac{1}{2} \phi(c_0 N_0 \lambda) \\
& \leq 2^{q-1} c_* |V(Du)-V(Dv_{\lambda, i})|^2 + \frac{1}{2} \phi(|Du|),
\end{split}
\]
and, hence,
\[
\phi(|Du|) \leq 2^q c_* |V(Du)-V(Dv_{\lambda, i})|^2.
\]
Combining this with~\eqref{main_pf_3} results in 
\begin{equation}\label{main_pf_5}
\int_{E(r_1,c_0 N_0 \lambda)} \phi(|Du|) \, dz \leq 2^q c_* \lambda g(\lambda) \epsilon \sum_{i=1}^{\infty} |Q_i^{(3)}|.
\end{equation}
Note that $|Q_i^{(3)}|=2^{3(n+2)}|Q_i^{(0)}|$ and that $\lambda g(\lambda) \lesssim \phi(\lambda)$ by Proposition~\ref{prop0}.
Therefore, the estimates \eqref{main_pf_5} and \eqref{main_pf_1} lead to the following:
\[
\int_{E(r_1,c_0 N_0 \lambda)} \phi(|Du|) \, dz \lesssim  \epsilon \sum_{i=1}^{\infty} \int_{Q_i^{(0)} \cap \{ |Du| > \frac{\lambda}{4} \}} \phi(|Du|) \, dz + \frac{\epsilon}{\delta} \sum_{i=1}^{\infty} \int_{Q_i^{(0)} \cap \{ |F| > \frac{\delta \lambda}{4} \}} \phi(|F|) \, dz,
\]
where the hidden constant depends only on $n$, $\nu$, $\Lambda$, $p$, and $q$.
As $Q_i^{(0)} \subset Q_{r_2}(z_0)$, $i=1,2,\dots$, are mutually disjoint, we conclude that, for any $\lambda \geq \lambda_1$,
\begin{equation}\label{main_pf_6}
\begin{split}
& \int_{E(r_1,c_0 N_0 \lambda)} \phi(|Du|) \, dz  \lesssim \epsilon
\int_{Q_{r_2} \cap \{ |Du| > \frac{\lambda}{4} \}} \phi(|Du|) \, dz + \frac{\epsilon}{\delta} \int_{Q_{r_2} \cap \{ |F| > \frac{\delta \lambda}{4} \}} \phi(|F|) \, dz.
\end{split}
\end{equation}

We set 
$$
|Du|_k:=\min\{|Du|,k\} \ \ \ \text{for }k\geq0.
$$
For $k > \lambda$, the inequality $|Du|_k>\lambda$ holds if and only if the inequality $|Du|>\lambda$ holds.
Therefore, for any $R\le r_1<r_2\le 2R$, 
\[
\begin{split}
 \int_{Q_{r_1}} \frac{\psi(\phi(|Du|_k))}{\phi(|Du|_k)} \phi(|Du|) \, dz & = \int_{Q_{r_1}} \int_{0}^{|Du|_k} d\hspace{-0.08cm} \left( \frac{\psi(\phi(\lambda))}{\phi(\lambda)} \right) \phi(|Du|) \, dz \\
& = \int_0^k \int_{Q_{r_1} \cap \{ |Du|_k > \lambda\}} \phi(|Du|) \, dz \, d\hspace{-0.08cm} \left( \frac{\psi(\phi(\lambda))}{\phi(\lambda)} \right) \\
& = \int_0^k \int_{E(r_1,\lambda)} \phi(|Du|) \, dz \, d\hspace{-0.08cm} \left( \frac{\psi(\phi(\lambda))}{\phi(\lambda)} \right) \\
& = \int_0^{\frac{k}{c_0 N_0}} \int_{E(r_1,c_0 N_0 \lambda)} \phi(|Du|) \, dz \, d\hspace{-0.08cm} \left( \frac{\psi(\phi(c_0 N_0 \lambda))}{\phi(c_0 N_0 \lambda)} \right) \\
& \leq \int_0^{\lambda_1} d\hspace{-0.08cm} \left( \frac{\psi(\phi(c_0 N_0 \lambda))}{\phi(c_0 N_0 \lambda)} \right) \int_{Q_{r_2}} \phi(|Du|) \, dz \\
&\qquad + \int_{\lambda_1}^{4k} \int_{E(r_1,c_0 N_0 \lambda)} \phi(|Du|) \, dz \, d\hspace{-0.08cm} \left( \frac{\psi(\phi(c_0 N_0 \lambda))}{\phi(c_0 N_0 \lambda)} \right) \\
& =: \RN{1} + \RN{2}.
\end{split}
\]
To estimate $\RN{1}$, we apply \adec{q_1} of $\psi$ and \dec{q} of $\phi$:
\[
\begin{split}
\RN{1} & \leq \frac{\psi(\phi(c_0 N_0 \lambda_1))}{\phi(c_0 N_0 \lambda_1)} \int_{Q_{r_2}} \phi(|Du|) \, dz \\
& \lesssim \left(\frac{64R}{r_2-r_1}\right)^
{(q_1-1)q(n+2)\max\left\{\frac{1}{2}, \frac2{p(n+2)-2n}\right\} } \psi_1(\phi(\lambda_0)) \int_{Q_{r_2}} \phi(|Du|) \, dz \\
& \lesssim \left(\frac{R}{r_2-r_1}\right)^
{\alpha_0} \psi_1(\phi(\lambda_0)) \int_{Q_{2R}} \phi(|Du|) \, dz,
\end{split}
\]
where $\alpha_0:=(q_1-1)q(n+2)\max\{\frac{1}{2},\frac{2}{p(n+2)-2n}\}$.

To estimate $\RN{2}$, we employ \eqref{main_pf_6} and Fubini's theorem:
\[
\begin{split}
\RN{2} & \lesssim \epsilon \int_{0}^{4k} \int_{Q_{r_2} \cap \{ |Du| > \frac{\lambda}{4} \}} \phi(|Du|) \, dz \, d\hspace{-0.08cm} \left( \frac{\psi(\phi(c_0 N_0 \lambda))}{\phi(c_0 N_0 \lambda)} \right) \\
& \quad + \frac{1}{\delta} \int_{0}^{\infty} \int_{Q_{r_2} \cap \{ |F| > \frac{\delta \lambda}{4} \}} \phi(|F|) \, dz\, d\hspace{-0.08cm} \left( \frac{\psi(\phi(c_0 N_0 \lambda))}{\phi(c_0 N_0 \lambda)} \right) \\
& = \epsilon \int_{0}^{k} \int_{Q_{r_2} \cap \{ |Du| > \lambda \}} \phi(|Du|) \, dz \, d\hspace{-0.08cm} \left( \frac{\psi(\phi(4c_0 N_0 \lambda))}{\phi(4c_0 N_0 \lambda)} \right) \\
& \quad + \frac{1}{\delta} \int_{0}^{\infty} \int_{Q_{r_2} \cap \{ |F| > \lambda \}} \phi(|F|) \, dz\, d\hspace{-0.08cm} \left( \frac{\psi(\phi(\frac{4}{\delta} c_0 N_0 \lambda))}{\phi(\frac{4}{\delta} c_0 N_0 \lambda)} \right) \\
& = \epsilon \int_{Q_{r_2}} \int_{0}^{|Du|_k} d\hspace{-0.08cm} \left( \frac{\psi(\phi(4c_0 N_0 \lambda))}{\phi(4c_0 N_0 \lambda)} \right) \phi(|Du|) \, dz  \\
& \quad + \frac{1}{\delta} \int_{Q_{r_2}} \int_{0}^{|F|} d\hspace{-0.08cm} \left( \frac{\psi(\phi(\frac{4}{\delta} c_0 N_0 \lambda))}{\phi(\frac{4}{\delta} c_0 N_0 \lambda)} \right) \phi(|F|) \, dz \\
& = \epsilon \int_{Q_{r_2}} \frac{\psi(\phi(4c_0 N_0 |Du|_k))}{\phi(4c_0 N_0 |Du|_k)} \phi(|Du|) \, dz + \frac{1}{\delta} \int_{Q_{r_2}} \frac{\psi(\phi(\frac{4}{\delta} c_0 N_0 |F|))}{\phi(\frac{4}{\delta} c_0 N_0 |F|)} \phi(|F|) \, dz \\
& \leq c \epsilon \int_{Q_{r_2}} \frac{\psi(\phi(|Du|_k))}{\phi(|Du|_k)} \phi(|Du|) \, dz + c \delta^{-1-(q_1-1)q} \int_{Q_{2R}} \psi(\phi(|F|)) \, dz.
\end{split}
\]
At this stage, we set $\epsilon=\epsilon(n,\nu,\Lambda,p,q,p_1,q_1,L)>0$ so small that 
\begin{equation}\label{epsilonchoice}
c\epsilon\leq \frac12.
\end{equation}
Thus, $\delta$ is chosen.
Combining the above estimates, we observe that
\[
\begin{split}
& \int_{Q_{r_1}} \frac{\psi(\phi(|Du|_k))}{\phi(|Du|_k)} \phi(|Du|) \, dz \\
& \ \leq \frac{1}{2} \int_{Q_{r_2}} \frac{\psi(\phi(|Du|_k))}{\phi(|Du|_k)} \phi(|Du|) \, dz + c \left(\frac{R}{r_2-r_1}\right)^
{\alpha_0} \psi_1(\phi(\lambda_0)) \int_{Q_{2R}} \phi(|Du|) \, dz \\
& \qquad + c \int_{Q_{2R}} \psi(\phi(|F|)) \, dz.
\end{split}
\]
Applying Lemma~\ref{lem:technical} yields the following:
\[
\begin{split}
& \int_{Q_R} \frac{\psi(\phi(|Du|_k))}{\phi(|Du|_k)} \phi(|Du|) \, dz  \leq  c  \psi_1(\phi(\lambda_0)) \int_{Q_{2R}} \phi(|Du|) \, dz + c \int_{Q_{2R}} \psi(\phi(|F|)) \, dz.
\end{split}
\]
Letting $k \to \infty$ and using Fatou's lemma, we conclude the following:
\[
\begin{split}
\int_{Q_R} \psi(\phi(|Du|)) \, dz & \lesssim  (\psi_1\circ\phi)(\lambda_0) \int_{Q_{2R}} \phi(|Du|) \, dz + \int_{Q_{2R}} \psi(\phi(|F|)) \, dz \\
& \lesssim (\psi_1 \circ \phi\circ \mathcal{D}^{-1} )\bigg( \fint_{Q_{2R}} \left[ \phi(|Du|) + \phi(|F|) \right] dz \bigg)\int_{Q_{2R}} \phi(|Du|) \, dz \\
& \qquad + \int_{Q_{2R}} \psi(\phi(|F|)) \, dz,
\end{split}
\]
which completes the proof.
\end{proof}

\section{Parabolic System}  \label{sec5}

We briefly discuss parabolic systems with a Uhlenbeck structure with a discontinuous coefficient. We consider the following parabolic system
\begin{equation}\label{parasystem}
(u_i)_t - \mathrm{div} \left(a(z) \frac{g(|Du|)}{|Du|}Du_i\right)=0 \quad \text{in }\ \Omega_I, \quad i=1,2,\cdots, N,
\end{equation} 
where $u=(u_1,\cdots, u_N)$, $N\ge 1$, $g:[0,\infty)\to [0,\infty)$ with $g \in C([0,\infty))\cap C^1((0,\infty))$ and  $g(0)=0$ satisfies 
\begin{equation}\label{phi-condition1}
p-1 \le \frac{s g'(s)}{g(s)} \le q-1, \ \ \ ^\forall s\ge0 \quad \text{for some }\ 2 \le p  \leq q.
\end{equation}
Further, $a:\Omega_I\to \R$ satisfies $\nu\le a(\cdot)\le \Lambda$ for some $0<\nu\le \Lambda$. For the systems we only consider the degenerate case (i.e., $p\ge 2$ in \eqref{phi-condition1}). In this case,  the $(\delta,R)$-vanishing condition is replaced with the following BMO-type condition on $a(\cdot)$:

\begin{definition}
Let $\delta,R>0$. $a:\Omega_I\to \R$ is said to be \textit{$(\delta,R)$-vanishing} if 
\[
\fint_{Q_{r,\rho}} |a(z)- (a)_{Q_{r,\rho}}| \,dz \le \delta
\]
for all $Q_{r,\rho}\subset \Omega_I$ with $r,\rho\in (0,R]$.
 \end{definition}

Then, we state a result that is the counterpart of Theorem \ref{mainthm} for the system in \eqref{parasystem}.

\begin{theorem}\label{thmsystem}
Let $g:[0,\infty)\to [0,\infty)$ with $g \in C([0,\infty))\cap C^1((0,\infty))$ and  $g(0)=0$ satisfy  \eqref{phi-condition1},  and $a:\Omega_I\to \R$ satisfy $\nu\le a(\cdot)\le \Lambda$ for some $0<\nu\le \Lambda$. 
In addition, let $F\in L^\phi_{\mathrm{loc}}(\Omega_I)$ with $\phi$ defined in \eqref{phidef}, and $\psi:[0,\infty)\to [0,\infty)$ be a weak $\Phi$-function satisfying \ainc{p_1} and \adec{q_1} for some $1<p_1\le q_1$ with the constant $L\ge 1$. 
There exists a small $\delta=\delta(n,\nu,\Lambda,p,q,p_1,q_1,L,g(1),\psi(1))>0$ such that if $a(\cdot)$ is $(\delta,R_0)$-vanishing for some $R_0>0$ and $u$ is a local weak solution to \eqref{parasystem}, then we have the following implication
$$
\phi(|F|) \in L^\psi_{\mathrm{loc}}(\Omega_I) \ \ \Longrightarrow\ \ \phi(|Du|) \in L^\psi_{\mathrm{loc}}(\Omega_I)
$$
with the following estimate: for any $Q_{2R}=Q_{2R}(z_0)\Subset \Omega_I $ with $R\le R_0$,
\[
\begin{split}
\fint_{Q_R} \psi(\phi(|Du|)) \,dz & \leq c \left[ \Psi \bigg( \fint_{Q_{2R}} \left[ \phi(|Du|) + \phi(|F|) \right] dz \bigg) \right] \fint_{Q_{2R}} \phi(|Du|) \, dz \\
& \qquad + c \fint_{Q_{2R}} \psi(\phi(|F|)) \, dz
\end{split}
\]
for some $c=c(n,\nu,\Lambda,p,q,p_1,q_1,L,g(1),\psi(1))>0$, where $Q_R=Q_R(z_0)$ and $\Psi$ is given in Theorem~\ref{mainthm}. 
\end{theorem}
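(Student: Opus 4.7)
The plan is to mirror the argument developed for Theorem~\ref{mainthm}, exploiting the Uhlenbeck structure to recover the vectorial analogues of the comparison estimates in Section~\ref{sec3}. Setting $A(z,\xi):=a(z)\tfrac{g(|\xi|)}{|\xi|}\xi$ for $\xi\in\R^{N\times n}$, assumptions \eqref{A-condition1}--\eqref{A-condition2} continue to hold with constants depending only on $\nu,\Lambda,p,q$, the monotonicity bound \eqref{relation_A_V} remains valid componentwise (this is where the Uhlenbeck structure is crucial, since the proof reduces to the scalar computation inside each component), and the BMO-type $(\delta,R)$-vanishing condition on the scalar function $a(\cdot)$ immediately yields the $(\delta,R)$-vanishing condition for $A$ in the sense of Definition~\ref{def:vanishing}. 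Thus the Orlicz/$V$-calculus of Lemma~\ref{lem:relation_V} can be invoked unchanged in the system setting.

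I would first reproduce the two comparison estimates of Section~\ref{sec3}. The first comparison is obtained by testing the system \eqref{parasystem} (with right-hand side $-\operatorname{div}(g(|F|)/|F|\,F)$) and the homogeneous Dirichlet system with the same coefficient $a(z)$ against $u-w$; this yields the system version of Lemma~\ref{lem:comp1} with an identical proof. The second comparison freezes $a(z)$ to its average $a_0:=(a)_{Q_3}$ and tests the two equations against $w-v$; the extra error term $\int(a(z)-a_0)\tfrac{g(|Dw|)}{|Dw|}Dw\cdot(Dw-Dv)\,dz$ is handled by H\"older's inequality together with the self-improving higher integrability of $Dw$ provided by Theorem~\ref{thmhigh}, whose proof extends to Uhlenbeck systems. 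Given a Lipschitz bound analogous to Theorem~\ref{thmLip} for the frozen reference problem $(v_i)_t-\operatorname{div}(a_0\,g(|Dv|)/|Dv|\,Dv_i)=0$, the vectorial counterpart of Lemma~\ref{lem:comp2} follows.

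With the two comparison estimates and the Lipschitz bound in hand, the rest of the proof of Theorem~\ref{mainthm} transfers verbatim. One rescales to the intrinsic cylinders $Q_i^{(j)}=Q^\lambda_{2^j\rho_i}(w_i)$, applies the Vitali covering of Lemma~\ref{lem:covering} on the super-level sets $E(r_1,\lambda)$ (whose construction depends only on $\phi$ and the Lebesgue differentiation theorem), uses the convexity and \dec{q} of $\phi$ together with \eqref{relation_V_2} to pass from $Du$ to $Dv_{\lambda,i}$ modulo a $V$-error, and concludes via the large-$M$-inequality principle and Lemma~\ref{lem:technical}. The appearance of $\Psi=\psi_1\circ\phi\circ\mathcal D^{-1}$ and the explicit dependence of constants is unchanged, because the level-set computation is driven only by the Orlicz data $(\phi,\psi)$ and the exponents $p,q,p_1,q_1$.

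The main obstacle, and the reason for the restriction $p\ge 2$ in \eqref{phi-condition1}, is the Lipschitz regularity for the homogeneous frozen system. In the vectorial Uhlenbeck setting with Orlicz growth, such an interior Lipschitz estimate is known in the degenerate range $p\ge 2$ by \cite{BaLin17,DieScharSchwa19} and can be brought to the interior formulation via the mollification procedure already used in the proof of Theorem~\ref{thmLip}; in the singular range $p<2$ counterexamples to Lipschitz regularity are available already for the parabolic $p$-Laplace system, which is why \eqref{phi-condition1} is imposed. Once this regularity input is granted, no further analytic obstacle arises beyond what is treated in Sections~\ref{sec3} and \ref{sec4}.
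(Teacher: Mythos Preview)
Your proposal is correct and follows essentially the same route as the paper's own sketch: rerun Sections~\ref{sec3}--\ref{sec4} with $A(z,\xi)=a(z)\,g(|\xi|)|\xi|^{-1}\xi$ for $\xi\in\R^{N\times n}$, invoking the system version of higher integrability (Theorem~\ref{thmhigh}, already proved for systems in \cite{HasOkparahigh}) and the interior Lipschitz bound for the frozen Uhlenbeck system from \cite{DieScharSchwa19} (the paper cites its reformulation \cite[Lemma~3.2]{Cho18-2}; \cite{BaLin17} treats scalar equations, so it is not the right citation here). One side remark is inaccurate: the restriction $p\ge 2$ is not due to counterexamples to Lipschitz regularity for singular parabolic $p$-Laplace systems, but to the fact that the Orlicz-growth Lipschitz result of \cite{DieScharSchwa19} is proved under the a priori hypothesis $|Du|\in L^2_{\mathrm{loc}}$, which is not guaranteed by the definition of weak solution when $p<2$.
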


\begin{proof}[Sketch of the proof]
The proof is exactly the same as in Theorem~\ref{mainthm} except for the essential modification concerned with changing from single equations to systems.
More precisely, in the proof, we replace $A(z,\xi)$ where $\xi \in \R^n$ with $a(z)|\xi|^{p-2}\xi$ where $\xi\in\R^{nN}$. 
Instead of Theorem~\ref{thmLip}, we take advantage of the Lipschitz regularity result for homogeneous parabolic systems in \cite[Lemma 3.2]{Cho18-2}, which is a modification of  \cite[Theorem 2.2]{DieScharSchwa19}. 
The higher integrability result \cite[Theorem~1.5]{HasOkparahigh} (Theorem~\ref{thmhigh} in this paper) is proved for general parabolic systems. 
All analyses in Sections~\ref{sec3} and \ref{sec4} are the same in both the equation and system cases and both the degenerate ($p\ge2$) and singular($p<2$) cases. Therefore, by repeating Sections~\ref{sec3} and \ref{sec4} and considering the replacement mentioned above, we prove the theorem.
\end{proof}

\begin{remark}
In the above theorem, we only consider the degenerate case (i.e., $p\ge 2$ in \eqref{phi-condition1}). 
The main reason is the lack of Lipschitz regularity when $p<2$. 
In \cite[Theorem 2.2]{DieScharSchwa19}, the Lipschitz regularity for the parabolic $\phi$-Laplace system \eqref{parasystem}, where $F\equiv 0$, $a(\cdot)\equiv 1$  and $\phi$ is defined in \eqref{phidef}, is proved under the assumption that the gradient of a weak solution is locally in $L^2$. This additional assumption is unclear from the definition of a weak solution when $p<2$.  We might also prove the theorem by replacing \eqref{phi-condition1} with \eqref{phi-condition} and assuming that $|Du|\in L^2_{\mathrm{loc}}(\Omega_I)$ (see \cite[Remark 1.1]{DieScharSchwa19}).
\end{remark}




\bibliographystyle{amsplain}

\begin{thebibliography}{99}

\bibitem{AcerMin07}
E. Acerbi and G. Mingione:
\emph{Gradient estimates for a class of parabolic systems}, 
Duke Math. J. 136 (2007), no. 2, 285--320.


\bibitem{Baroni13}
P. Baroni:
\emph{Lorentz estimates for degenerate and singular evolutionary systems},
 J. Differential Equations 255 (2013), no. 9, 2927--2951. 


\bibitem{BaLin17} 
P. Baroni and C. Lindfors: 
\emph{The Cauchy-Dirichlet problem for a general class of parabolic equations}, 
Ann. Inst. H. Poincar\'e Anal. Non Lin\'eaire 34 (2017), no. 3, 593--624. 


\bibitem{Bogelein14}
 V. B\"ogelein:
\emph{Global gradient bounds for the parabolic p-Laplacian system}, 
Proc. Lond. Math. Soc. (3) 111 (2015), no. 3, 633--680.



\bibitem{BoDuMin11}
V. B\"ogelein, F. Duzaar, G. Mingione: 
\emph{Degenerate problems with irregular obstacles. J. Reine Angew}, Math. 650 (2011), 107--160.


\bibitem{BC16}
S. Byun and Y. Cho:
\emph{Nonlinear gradient estimates for generalized elliptic equations with nonstandard growth in nonsmooth domains}, Nonlinear Anal. 140 (2016), 145--165.


\bibitem{BOR13} 
S. Byun, J. Ok and S. Ryu:
\emph{Global gradient estimates for general nonlinear parabolic equations in nonsmooth domains}, J. Differential Equations 254 (2013), no. 11, 4290--4326.


\bibitem{Cho18-1} 
Y. Cho:
\emph{Global gradient estimates for divergence-type elliptic problems involving general nonlinear operators}, J. Differential Equations 264 (2018), no. 10, 6152--6190.


\bibitem{Cho18-2} 
Y. Cho:
\emph{Calder\'on--Zygmund theory for degenerate parabolic systems involving a generalized $p$-Laplacian type},
J. Evol. Equ. 18 (2018), no. 3, 1229--1243. 



\bibitem{DiBenedetto_book}
E. DiBenedetto:
\emph{Degenerate parabolic equations}, 
Universitext. Springer-Verlag, New York, 1993.

\bibitem{DiBeFrie84}
E. DiBenedetto and A. Friedman: 
\emph{Regularity of solutions of nonlinear degenerate parabolic systems}, 
J. Reine Angew. Math. 349 (1984), 83--128. 

\bibitem{DiBeFrie85}
E. DiBenedetto and A. Friedman: 
\emph{H\"older estimates for nonlinear degenerate parabolic systems}, 
J. Reine Angew. Math. 357 (1985), 1--22. 


\bibitem{DiBeGiaVe08} 
E. DiBenedetto, U. Gianazza and V. Vespri:
\emph{Harnack estimates for quasi-linear degenerate parabolic differential equations},
Acta Math. 200 (2008), no. 2, 181--209. 


\bibitem{DiBeGiaVe_book} 
E. DiBenedetto, U. Gianazza and V. Vespri:
\emph{Harnack's inequality for degenerate and singular parabolic equations},
Springer Monographs in Mathematics. Springer, New York, 2012.



\bibitem{DieEtt08} 
L. Diening and F. Ettwein:
\emph{Fractional estimates for non-differentiable elliptic systems with general growth},
Forum Math. 20 (2008), no. 3, 523--556. 

\bibitem{DieScharSchwa19} 
L. Diening, T. Scharle, S. Schwarzacher:
\emph{Regularity for parabolic systems of Uhlenbeck type with Orlicz growth},
J. Math. Anal. Appl. 472 (2019), no. 1, 46--60.


\bibitem{Giusti_book} 
E. Giusti:
\emph{Direct methods in the calculus of variations}, 
World Scientific Publishing Co., Inc., River Edge, NJ, 2003.

\bibitem{HarH_book}
P. Harjulehto and P. H\"ast\"o:
\emph{Orlicz spaces and generalized Orlicz spaces}, 
Lecture Notes in Mathematics, vol. 2236, Springer, Cham, 2019. 


\bibitem{HasOk}
P. H\"ast\"o and J. Ok: 
\emph{Maximal regularity for local minimizers of non-autonomous functionals}, 
J. Eur. Math. Soc., to appear. ArXiv:1902.00261


\bibitem{HasOkparahigh}
P. H\"ast\"o and J. Ok: 
\emph{Higher integrability for parabolic systems with Orlicz growth},  ArXiv:1905.05577



\bibitem{HwangLie15-1} 
S. Hwang and G. Lieberman:
\emph{H\"older continuity of bounded weak solutions to generalized parabolic p-Laplacian equations I: degenerate case},
Electron. J. Differential Equations 2015, no. 287, 32 pp.


\bibitem{HwangLie15-2} 
S. Hwang and G. Lieberman:
\emph{H\"older continuity of bounded weak solutions to generalized parabolic $p$-Laplacian equations II: singular case},
Electron. J. Differential Equations 2015, no. 288, 24 pp.



\bibitem{KiLewis00}
J. Kinnunen and J. Lewis:
\emph{Higher integrability for parabolic systems of p-Laplacian type}, 
Duke Math. J. 102 (2000), no. 2, 253--271. 


\bibitem{Lieberman91}
G.M. Lieberman:
\emph{The natural generalization of the natural conditions of Ladyzhenskaya and Ural'tseva for elliptic equations}, 
Comm. Partial Differential Equations 16 (1991), no. 2-3, 311--361. 


\bibitem{Lieberman06} 
G.M. Lieberman:
\emph{H\"older regularity for the gradients of solutions of degenerate parabolic systems}, 
Ukr. Math. Bull. 3 (2006) 352--373.



\bibitem{Min07}  
G. Mingione:
\emph{Calder\'on--Zygmund theory for elliptic problems with measure data}, Ann. Sc. Norm. Super. Pisa Cl. Sci. (5) 6 (2007), no. 2, 195--261.



\bibitem{Verde11} 
A. Verde:
\emph{Calder\'on--Zygmund estimates for systems of $\phi$-growth}, 
J. Convex Anal. 18 (2011), no. 1, 67--84. 


\bibitem{Yao19}
F. Yao:
\emph{Lorentz estimates for a class of nonlinear parabolic systems}, 
J. Differential Equations 266 (2019), no. 4, 2078--2099.
 
\end{thebibliography}

\end{document}